\theoremstyle{plain}
\newtheorem{thm}{Théorème}
\newtheorem{lem}[thm]{Lemme}
\newtheorem{cor}[thm]{Corollaire}
\newtheorem{prop}[thm]{Proposition}
\newtheorem{crit}[thm]{Critère}
\theoremstyle{definition}
\newtheorem{ex}[thm]{Exemple}
\newtheorem{conv}[thm]{Convention}
\newtheorem{exo}[thm]{Exercice}
\newtheorem{rem}[thm]{Remarque}
\newtheorem{defi}[thm]{Définition}
\newtheorem{nota}[thm]{Notation}
\newcommand{\bgt}{\begin{thm}}
\newcommand{\et}{\end{thm}}
\newcommand{\bgp}{\begin{prop}}
\newcommand{\ep}{\end{prop}}
\newcommand{\bgd}{\begin{defi}}
\newcommand{\ed}{\end{defi}}
\newcommand{\bgl}{\begin{lem}}
\newcommand{\el}{\end{lem}}
\newcommand{\bgr}{\begin{rem}}
\newcommand{\er}{\end{rem}}
\newcommand{\bge}{\begin{ex}}
\newcommand{\ee}{\end{ex}}
\newcommand{\bgex}{\begin{exo}}
\newcommand{\eex}{\end{exo}}
\newcommand{\bgi}{\begin{itemize}}
\newcommand{\ei}{\end{itemize}}
\newcommand{\bgpe}{\begin{proof}}
\newcommand{\epe}{\end{proof}}
\newcommand{\bgar}{\begin{array}}
\newcommand{\ear}{\end{array}}
\newcommand{\bgn}{\begin{nota}}
\newcommand{\en}{\end{nota}}
\newcommand{\bgc}{\begin{cor}}
\newcommand{\ec}{\end{cor}}
\newcommand{\bgen}{\begin{enumerate}}
\newcommand{\een}{\end{enumerate}}
\newcommand{\mn}{{\mathbb N}}
\newcommand{\mz}{{\mathbb Z}}
\newcommand{\mfsep}{{F_{s\acute{e}p}}}
\DeclareMathOperator{\CH}{CH} 
\DeclareMathOperator{\SB}{SB}
\DeclareMathOperator{\Mot}{Mot}
\DeclareMathOperator{\mult}{mult}
\newcommand{\SO}{\mathrm{SO}}
\DeclareMathOperator{\Sl}{SL}
\DeclareMathOperator{\Spec}{Spec}
\title{Équivalence motivique des groupes algébriques semisimples}
\author{Charles De Clercq}
\date{\today}
\begin{document}

\parindent=0pt

\begin{abstract}
Deux groupes semisimples sont dits motiviquement équivalents si les motifs des variétés de drapeaux généralisées associées sont isomorphes modulo tout nombre premier $p$. L'objet de cette note est de construire les invariants combinatoires qui caractérisent l'équivalence motivique et sont les analogues motiviques des indices de Tits apparaissant dans la classification des groupes algébriques semisimples. L'expression de ces invariants -les $p$-indices de Tits supérieurs- en fonction des indices classiques associés aux structures naturelles sous-jacentes aux groupes semisimples permet de produire des critères algébriques d'équivalence motivique, généralisant le critère de Vishik d'équivalence motivique des quadriques. Elle permet en outre de clarifier le lien qu'entretiennent les motifs et la géométrie birationnelle des variétés de drapeaux.
\end{abstract}

\bigskip
\maketitle


\section{Introduction}

La classification des groupes algébriques semisimples a été obtenue dans les années soixantes par les efforts conjugués de Borel, Tits, Satake, Serre, Springer et bien d'autres, dans la lignée de la classification de Chevalley sur un corps algébriquement clos. Comme pressenti par Siegel et Weil pour les groupes classiques, ces groupes sont intimement liés aux structures algébriques qui vivent sur le corps de base (formes quadratiques, algèbres simples centrales à involution, algèbres de Jordan, de Cayley...). Un des principaux outils de cette classification est la notion d'indice de Tits d'un groupe semisimple, qui consiste en son diagramme de Dynkin muni d'une action du groupe de Galois absolu, et coloré d'une manière spécifique. La détermination de l'ensemble des indices de Tits possibles, obtenue dans \cite{tits1}, peut être vue comme un prolongement des théorèmes de Witt de la théorie algébrique des formes quadratiques.

L'objectif de cet article est de fournir une classification des groupes semisimples en fonction d'invariants de nature motivique. Comme il est d'usage dans la théorie classique, on ne souhaite imposer aucune restriction sur le corps de définition des groupes considérés -notamment sur sa caractéristique- le cadre choisi est ainsi celui des motifs de Grothendieck construits à partir des groupes de Chow. L'invariant naturel que l'on associe à un groupe semisimple est alors l'ensemble des motifs des variétés de drapeaux généralisées qui lui sont associées.

Rappelons qu'un groupe semisimple $G$ sur un corps $F$ étant fixé, une $G$-variété $X$ est une \emph{$G$-variété de drapeaux généralisée} s'il existe un corps $E$ contenant $F$ sur lequel elle devient isomorphe au quotient de $G_E$ par un sous-groupe parabolique. L'étude des motifs de variétés de drapeaux généralisées, initiée par les travaux de Rost et Vishik sur le motif des quadriques dans les années 90 \cite{rost1,rost2,vishmot} est un domaine florissant qui a permis la résolution de nombreuses conjectures classiques et difficiles (citons par exemple les conjectures de Hoffmann, de Kaplansky, ou encore de Milnor).

De la même manière que pour la classification de Tits, il s'agit de construire des invariants combinatoires qui caractérisent les invariants motiviques des groupes algébriques semisimples. Petrov, Semenov et Zainoulline introduisent dans \cite{jinv} le $J$-invariant, un invariant numérique généralisant le $J$-invariant des formes quadratiques de Vishik. Le $J$-invariant d'un groupe semisimple \emph{intérieur} $G$ décrit le motif de la variété des sous-groupes de Borel de $G$ et possède de nombreuses applications (voir \cite{shells} ou \cite{trial}). Il est aussi un des ingrédients essentiels dans la classification des variétés de drapeaux généralisées \emph{génériquement déployées}, obtenue dans \cite{gensplit1}, \cite{gensplit2}.

Un conséquence essentielle de la construction de ces invariants combinatoires est qu'elle permet de relier l'étude des motifs aux invariants classiques associés aux groupes semisimples. Le célèbre critère de Vishik \cite[Theorem 4.18]{vish} (voir \cite{kahn2,kahn} pour un panorama général de la théorie de Vishik) stipule que les motifs de deux quadriques projectives de même dimension sont isomorphes à coefficients entiers si et seulement si les indices de Witt des formes quadratiques associées coincident sur toutes les extensions de leur corps de définition. Initialement démontré en caractéristique $0$ puis étendu par Karpenko en caractéristique positive \cite{motequivkarp}, ce résultat montre que le motif d'une quadrique est essentiellement déterminé par l'isotropie de la forme quadratique associée sur les extensions du corps de base. Nous montrons par la suite comment la classification que nous obtenons permet non seulement d'obtenir une nouvelle preuve du critère de Vishik, mais aussi de le généraliser à n'importe quel groupe semisimple $p$-intérieur -sans donc exclure les groupes \emph{extérieurs}-. L'existence de ces critères algébriques d'équivalence motivique permet en particulier d'éclaircir le lien qu'entretiennent les motifs et la géométrie birationnelle des variétés de drapeaux généralisées.

Si $G$ est un groupe semisimple et $\Lambda$ est un anneau commutatif fixé, on associe en section VI à tout sous-ensemble $\Theta$ des sommets de son diagramme de Dynkin un motif. Ce motif est apppelé le \emph{motif standard} de $G$ de type $\Theta$ (à coefficients dans $\Lambda$). Lorsque $G$ est de type intérieur, le motif standard de type $\Theta$ de $G$ est simplement le motif à coefficients dans $\Lambda$ d'une $G$-variété de drapeaux généralisée de type $\Theta$, au sens de Borel-Tits \cite{boreltits}. 

\begin{defi}
Soient $G$ et $G'$ deux groupes semisimples sur un corps $F$, tous deux formes intérieures d'un même groupe quasi-déployé. On dit que $G$ et $G'$ sont motiviquement équivalents à coefficients dans un anneau $\Lambda$ si pour tout sous ensemble $\Theta$ de leur diagramme de Dynkin, les motifs standards de type $\Theta$ à coefficients dans $\Lambda$ de $G$ et $G'$ sont isomorphes. Si $p$ est un nombre premier et $\Lambda$ est un corps à $p$ éléments, on dira que $G$ et $G'$ sont motiviquement équivalents modulo $p$.
\end{defi}

On dit simplement que deux groupes semisimples sont \emph{motiviquement équivalents} s'ils sont motiviquement équivalents modulo tout nombre premier $p$. Comme les résultats de cette note le montrent -le corollaire \ref{inner} , pour ne citer que lui- l'équivalence motivique ainsi définie se révèle beaucoup plus riche et féconde que l'équivalence motivique à coefficients entiers. Historiquement introduite comme une contrainte nécessaire à l'existence des décompositions motiviques, l'étude des motifs à coefficients finis montre ici toute son importance. En effet, les motifs à coefficients entiers se révèlent -dès lors qu'on s'écarte des groupes othogonaux- insuffisants pour fournir des invariants adaptés à l'étude de la géométrie birationnelle des variétés de drapeaux. Nous verrons lors des sections IX et X notamment les conséquences de cette classification lors de l'étude de l'équivalence motivique des groupes spéciaux linéaires.\\

Un des outils essentiels de notre classification est le théorème \ref{classgeoalg} qui relie l'équivalence motivique et l'équivalence modulo $p$ des variétés projectives, notion introduite en section II. La preuve de ce résultat repose sur deux notions fondamentales. D'une part elle tire parti des décompositions motiviques de type Levi des variétés de drapeaux généralisées isotropes, dues à Chernousov, Gille et Merkurjev \cite{chermergil} (voir aussi \cite{brosnan}). L'autre ingrédient essentiel utilisé est la notion d'extension pondérée du corps de base, introduite en section V. La section VI est dédiée à la construction des $p$-indices de Tits supérieurs, objets combinatoires qui correspondent aux versions locales des indices de Tits classiques. L'objet de la section IX est de prouver comment ces données combinatoires caractérisent les classes d'équivalence motivique des groupes semisimples.

La détermination obtenue dans \cite{decskip} de toutes valeurs possibles des $p$-indices de Tits ainsi que leur connexions avec les invariants classiques des groupes semisimples permettent de généraliser le critère de Vishik aux groupes semisimples et relie les motifs et la géométrie birationnelle des variétés de drapeaux généralisées. Ces critères de nature algébrique mettent en jeu les invariants classiques des structures associées aux groupes semisimples (formes quadratiques pour les groupes orthogonaux, algèbres centrales simples pour les groupes spéciaux linéaires...) et n'ont pas d'analogue si l'on considère l'équivalence motivique à coefficients entiers. En somme, l'invariant motivique adapté à la géométrie birationnelle des variétés de drapeaux généralisées n'est pas le motif à coefficients entiers, mais plutôt la collection de tous les motifs à coefficients dans $\mathbb{F}_p$, pour tout nombre premier $p$.

\section{Dominance, équivalence modulo $p$ des variétés projectives}

Dans toute la suite on fixe un corps de base $F$, et $p$ désignera toujours un nombre premier. Pour tout nombre premier $p$ on introduit une relation de préordre sur la classe des $F$-variétés propres : la relation de \emph{dominance modulo $p$}. Suivant \cite{EKM,fulton}, on note par $\CH_k(X)$ le groupe de Chow des cycles algébriques de dimension $k$ sur une $F$-variété $X$ modulo équivalence rationelle.

Soit $\alpha:X\leadsto Y$ une correspondance de Chow de degré $0$ entre $F$-variétés, $Y$ étant supposée propre -c'est à dire un élément du groupe de Chow $\CH_{\dim(X)}(X\times Y)$-. Si $\pi$ est la projection naturelle de $X\times Y$ sur le premier facteur, la \emph{multiplicité} de $\alpha$ est l'unique entier $\mult(\alpha)$ satisfaisant la relation $\pi_{\ast}(\alpha)=\mult(\alpha)\cdot [X]$. 

\begin{defi}
Soient $X$ et $Y$ deux $F$-variétés propres. On dit que \emph{$X$ domine $Y$} s'il existe une correspondance de Chow $X\leadsto Y$ de degré $0$ et de multiplicité $1$. S'il existe une correspondance $\alpha:X\leadsto Y$ de degré $0$ et de multiplicité première à $p$, on dit que \emph{$X$ domine $Y$ modulo $p$}.
\end{defi}

Un calcul montre que la multiplicité de la composée  $\alpha\circ\beta$ de deux correspondances (au sens de \cite[§2]{manin}) est le produit des multiplicités respectives de $\alpha$ et $\beta$. La dominance définie précédemment est donc une relation de préordre sur la classe des $F$-variétés propres, et la relation d'équivalence qui lui est associée sera simplement appelée relation d'\emph{équivalence}. Ces considérations valent bien entendu aussi pour les relations de dominance modulo $p$, et on parle alors d'\emph{équivalence modulo $p$} pour les relations d'équivalences associées.\\

Les relations d'équivalence et d'équivalence modulo $p$ sont étroitement liées à la géométrie birationnelle des variétés. En effet la relation d'équivalence coincide avec la relation d'équivalence birationnelle stable pour les variétés de Severi-Brauer, tout comme pour nombre d'autres variétés de drapeaux générlisées. Notons que la situation est encore plus ténue entre deux quadriques projectives, qui sont stablement birationellement équivalentes si et seulement si elles sont équivalentes modulo $2$, en vertu du théorème de Springer \cite{springer}.

\section{Classifications de Tits et Borel-Tits}

Le lecteur non familier avec la classification des groupes semisimples et la classification de Borel-Tits des variétés de drapeaux généralisées est fortement invité à consulter \cite{borel}, \cite{boreltits}, \cite{KMRT},  \cite{index} \cite{index2}, \cite{tits1}, \cite{weil} pour plus de détails.

Comme d'accoutumée, on considère dans un premier temps un groupe algébrique semisimple $G$ sur un corps $F$ supposé séparablement clos. Un tore maximal $T\subset G$ fixé donne lieu à un système de racines $\Phi(G,T)$ au sens de \cite{bourba}, et donc à un diagramme de Dynkin, une fois un système de racines simples choisi. Les sous-tores maximaux de $G$ étant tous conjugués, ce diagramme de Dynkin ne dépend pas du choix de $T$, et il ne dépend pas non plus du choix du système de racines simples. Le diagramme de Dynkin d'un groupe semisimple $G$, que l'on note $\Delta(G)$, caractérise complètement sa classe d'isogénie stricte. Le nombre de sommets de $\Delta(G)$ est le \emph{rang} du groupe $G$.

Considérons désormais le cas où $F$ est quelconque, et notons en $\mfsep$ une clôture séparable. En vertu de \cite[Exposé XIV, Théorème 1.1]{groth} un groupe semisimple $G$ sur $F$ possède toujours un tore maximal, qui n'est cependant cette fois pas nécessairement déployé. On associe à $G$ le système de racine $\Phi(G_{\mfsep},T_{\mfsep})$, ainsi que le diagramme de Dynkin $\Delta(G_{\mfsep})$, que l'on note à nouveau $\Delta(G)$. Le groupe de Galois absolu $Gal(\mfsep/F)$ agit naturellement sur le système de racines $\Phi(G_{\mfsep},T_{\mfsep})$.\\

Borel et Tits \cite{boreltits} introduisent une seconde action du groupe de Galois absolu de $F$ sur le diagramme de Dynkin de $G$. Le groupe de Weil agissant simplement transitivement sur les systèmes de racines simples de $\Phi(G_{\mfsep},T_{\mfsep})$, il existe pour tout tel système $\Pi$ un élément $w_{\sigma}$ qui le stabilise. L'action associée sur le diagramme de Dynkin est appelée la $\ast$-action.

Les classes d'isomorphisme des $G_{\mfsep}$-variétés de drapeaux sont en bijection avec les avec les sous-ensembles (des sommets) du diagramme de Dynkin de $G_{\mfsep}$, par \cite{boreltits}. Si $\Theta$ est le sous-ensemble associé à une $G_{\mfsep}$-variétés de drapeaux $X$, on dit que $X$ est de \emph{type $\Theta$}. Par descente \cite{borelspringer}, les classes d'isomorphismes de $G$-variétés de drapeaux généralisées sont alors en correspondance bijective avec les sous-ensembles de $\Delta(G)$ invariants pour la $\ast$-action. Les constructions et définitions ultérieures étant toutes visiblement invariantes par isomorphisme, nous noterons dans la suite $X_{\Theta,G}$ toute $G$-variété de drapeaux généralisée de type $\Theta$, identifiant ces variétés et leur classe d'isomorphisme.

\begin{conv}
On fixe par la suite pour convention que la variété des sous-groupes de Borel de $G$ est de type $\Delta(G)$. Une $\Sl_n$-variété (resp. une $\SO_n$-variété) de drapeau généralisée de type $\Theta=\{k\}$ correspond alors à une Grassmannienne de $k$-plans dans un espace de dimension $n$ (resp. une variété de $k$-plans isotropes). Dans ce contexte, la variété de Severi-Brauer d'une algèbre simple centrale $A$ est alors de type $\{1\}$ pour le groupe spécial linéaire de $A$ (on renvoit à la section X pour plus de détails).
\end{conv}

Le \emph{type} d'un groupe algébrique semisimple $G$ est la donnée de son diagramme de Dynkin, muni de la $\ast$-action du groupe de Galois absolu sur l'ensemble de ses sommets. Deux groupes semisimples sont de même type si et seulement s'ils sont tous deux formes intérieures d'un même groupe quasi déployé.

\begin{defi}
Soient $G$ et $G'$ deux groupes semisimples sur $F$, tous deux formes intérieures d'un même groupe quasi-déployé. On dit que $G$ et $G'$ sont \emph{équivalents modulo $p$} si pour tout sous-ensemble $\ast$-invariant $\Theta$ de leur diagramme de Dynkin, leurs variétés de drapeaux généralisées respectives sont équivalentes modulo $p$.
\end{defi}

Notons que les classes équivalence modulo $p$ des groupes semisimples sont bien connues lorsque $F$ est séparablement clos. En effet sous cette hypothèse un groupe semisimple sur $F$ est \emph{déployé} (c'est à dire qu'il possède un tore maximal déployé). Les variétés de drapeaux généralisées sont alors rationnelles, et deux groupes semisimples de même type sont équivalents modulo tout nombre premier $p$.

\section{Indices de Tits, sous groupes de Levi}

Les indice de Tits sont les données combinatoires qui déterminent l'isotropie des groupes semisimples. L'indice de Tits d'un groupe semisimple $G$ est la donnée du diagramme de Dynkin de $G$ muni de la $\ast$-action, ainsi que d'un sous-ensemble $\Delta_0(G)$ des sommets de ce diagramme.

La coloration des sommets du diagramme de Dynkin de $G$ est déterminée comme suit. On fixe un tore maximal $T$ de $G$, ainsi qu'un tore \emph{déployé} maximal $S$ contenu dans $T$. L'ensemble des racines simples de $\Phi(G_{\mfsep},T_{\mfsep})$ qui s'annulent sur $S$ correspond à un sous-ensemble des sommets du diagramme de Dynkin de $G$ que l'on note $\delta_0$. L'ensemble $\delta_0$ est celui des \emph{orbites distinguées} de $\Delta(G)$ et correspond au complémentaire de $\Delta_0$ (voir \cite{tits1}). On dit que le groupe $G$ est \emph{quasi-déployé} lorsque $\delta_0(G)=\Delta(G)$, tandis dans le cas extrême inverse -c'est à dire lorsque $\delta_0(G)$ est vide- $G$ est dit \emph{anisotrope}. La détermination de l'ensemble des valeurs possibles des indices de Tits possibles est présentée dans \cite{tits1}.

L'indice de Tits d'un groupe semisimple est d'autant plus important qu'il détermine l'isotropie (c'est à dire l'existence de points rationnels) des variétés de drapeaux généralisées. On a en effet le résultat \cite{borelspringer} de type Witt suivant : une $G$-variété de drapeaux généralisée possède un point rationnel si et seulement si le sous-ensemble qui lui est associé est contenu dans $\delta_0(G)$.

Nous aurons enfin besoin de la construction suivante, qui généralise celle de \emph{noyau anisotrope} de $G$, et que nous utiliserons pour ramener la preuve du théorème \ref{classgeoalg} au cas anisotrope. Soit $G$ une groupe semisimple possédant un sous-groupe parabolique $P$ de type $\Theta$ défini sur $F$. Suivant \cite{borel} (voir aussi \cite{boreltits},  \cite{kerreh}, \cite{tits1}) on note $G_{\Theta}$ la partie semisimple d'un sous-groupe de Levi de $P$. Le type du groupe $G_{\Theta}$ est obtenu à partir du type de $G$ en lui ôtant les sommets qui appartiennent à $\Theta$, ainsi que toutes les arrêtes du diagramme de Dynkin de $G$ qui atteignent ces sommets.

\begin{prop}\label{prop5}
Soient $G$ et $G'$ deux groupes semisimples sur $F$, tous deux formes intérieures d'un même groupe quasi-déployé. Les groupes $G$ et $G'$ sont équivalents modulo $p$ si et seulement si pour tout sous-ensemble $\ast$-invariant $\Theta$ de leur diagramme de Dynkin contenu dans $\delta_0$, les groupes $G_{\Theta}$ et $G'_{\Theta}$ sont équivalents modulo $p$.
\end{prop}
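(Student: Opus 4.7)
Le plan est de r�duire la proposition � une �quivalence cl� : pour tout $\ast$-invariant $\Theta \subseteq \delta_0$ et tout $\ast$-invariant $\Xi \subseteq \Delta(G) \setminus \Theta$, les vari�t�s $X_{\Xi \cup \Theta, G}$ et $X_{\Xi, G_\Theta}$ sont $F$-�quivalentes avec multiplicit� $1$ dans les deux sens (et sym�triquement apr�s remplacement de $G$ par $G'$). L'implication ($\Leftarrow$) est alors imm�diate en prenant $\Theta = \emptyset$ : on a $G_\emptyset = G$ et $G'_\emptyset = G'$, donc l'hypoth�se fournit directement $G \sim_p G'$. L'implication directe s'obtient par transitivit� : $G \sim_p G'$ entra�ne $X_{\Xi \cup \Theta, G} \sim_p X_{\Xi \cup \Theta, G'}$ puisque $\Xi \cup \Theta$ est $\ast$-invariant dans $\Delta(G)$, et la composition avec les deux �quivalences cl�s donne $X_{\Xi, G_\Theta} \sim_p X_{\Xi, G'_\Theta}$ pour tout $\ast$-invariant $\Xi \subseteq \Delta(G_\Theta)$, d'o� $G_\Theta \sim_p G'_\Theta$.

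Pour �tablir l'�quivalence cl�, j'exhiberais des points rationnels mutuels sur les corps de fonctions, en exploitant que $\Theta \subseteq \delta_0$ assure l'existence d'un sous-groupe parabolique $P$ de $G$ de type $\Theta$ d�fini sur $F$, de facteur de Levi de partie semisimple $G_\Theta$. Sur $F(X_{\Xi, G_\Theta})$, le groupe $G_\Theta$ admet un parabolique de type $\Xi$ qui se rel�ve, via la d�composition de Levi de $P$, en un parabolique de $G$ de type $\Xi \cup \Theta$, fournissant le point rationnel voulu sur $X_{\Xi \cup \Theta, G}$. R�ciproquement, sur $F(X_{\Xi \cup \Theta, G})$, un parabolique $Q$ de $G$ de type $\Xi \cup \Theta$ est contenu dans un unique parabolique de type $\Theta$, $G$-conjugu� � $P$ par la conjugaison standard des paraboliques de m�me type sur une extension donn�e, et l'image de $Q$ dans le facteur semisimple de Levi donne alors un parabolique de $G_\Theta$ de type $\Xi$.

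L'obstacle principal est essentiellement conventionnel : il faut v�rifier que la correspondance entre paraboliques de $G$ de type $\Xi \cup \Theta$ contenus dans $P$ et paraboliques de $G_\Theta$ de type $\Xi$ est bien celle attendue par la convention de la section III (via le dictionnaire de Borel-Tits), et que l'argument passe sans accroc aux changements de base sur les corps de fonctions. Notons que cet argument n'utilise pas toute la force de la d�composition motivique de Chernousov-Gille-Merkurjev : il suffit d'exploiter qu'un point rationnel de $X_{\Theta, G}$ sur $F$ trivialise la fibration naturelle $X_{\Xi \cup \Theta, G} \to X_{\Theta, G}$ au-dessus de la fibre $X_{\Xi, G_\Theta}$.
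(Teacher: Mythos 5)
Votre preuve est correcte et suit essentiellement la même route que celle du texte : l'ingrédient décisif est, dans les deux cas, la birationalité stable entre $X_{\Xi\cup\Theta,G}$ et $X_{\Xi,G_\Theta}$ (le texte la cite sous la forme du théorème~3.18 de Kersten--Rehmann et la combine avec l'invariance par homotopie des groupes de Chow, tandis que vous la redérivez directement via le parabolique $P$ de type $\Theta$ et la conjugaison des paraboliques de même type), puis la composition des correspondances de dominance pour conclure. Votre reformulation sous forme d'«~équivalence clé~» symétrique $X_{\Xi\cup\Theta,G}\sim X_{\Xi,G_\Theta}$ rend l'argument un peu plus transparent que le texte, qui procède en deux temps avec un corps intermédiaire $E$, mais le contenu mathématique est identique ; votre remarque finale sur l'inutilité des décompositions de Chernousov--Gille--Merkurjev est juste, mais vaut déjà pour la preuve du texte, qui ne les invoque pas non plus dans cette proposition.
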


\begin{proof}
On s'attarde sur la condition nécessaire, la condition suffisante étant claire. On fixe un sous-ensemble $\ast$-invariant $\Theta$ ainsi qu'un autre sous ensemble $\ast$-invariant $\tilde{\Theta}$ de $\Delta\setminus \Theta$.\\
Par hypothèse, les variétés de drapeaux généralisées $X_{\tilde{\Theta},G}$ et $X_{\tilde{\Theta},G'}$ sont équivalentes modulo $p$. En outre par \cite[Theorem 3.18]{kerreh}, le corps des fonctions de la variété $X_{\tilde{\Theta},G'}$ est une extension transcendante pure de celui de $X_{\tilde{\Theta},G_{\Theta}'}$. L'invariance par homotopie des groupes de Chow \cite[Theorem 57.13]{EKM} assure donc de l'existence d'un zéro cycle de degré premier à $p$ sur la variété $X_{\tilde{\Theta},G}$ après extension au corps des fonctions de $X_{\tilde{\Theta},G_{\Theta}'}$. On voit donc que la variété $X_{\tilde{\Theta},G_{\Theta}'}$ domine $X_{\tilde{\Theta},G}$ modulo $p$, par \cite[Lemma 75.1]{EKM}.\\
Ainsi, $X_{\tilde{\Theta},G}$ possède un point fermé sur le corps des fonctions de $X_{\tilde{\Theta},G_{\Theta}'}$ dont le corps résiduel, qu'on note $E$, est une extension finie de degré premier à $p$. Le corps des fonctions de la $G_E$-variété de drapeaux généralisée $X_{\tilde{\Theta},G_E}$ est donc une extension transcendante pure de $E$. Invoquant à nouveau l'invariance par homotopie des groupes de Chow, $X_{\tilde{\Theta},G_{\Theta}}$ a un zéro-cycle de degré premier à $p$ sur $E$, puisqu'elle en possède un sur le corps des fonctions de $X_{\tilde{\Theta},G_E}$. On en déduit que $X_{\tilde{\Theta},G_{\Theta}'}$ domine $X_{\tilde{\Theta},G_{\Theta}}$ modulo $p$ et le même raisonnement interchangeant leur rôles induit que ces variétés sont équivalentes modulo $p$.
\end{proof}

\section{Extensions pondérées}

Deux variétés équivalentes modulo un nombre premier $p$ le demeurent sur n'importe quelle extension du corps de base. On s'intéresse dans cette partie via la notion d'extension pondérée au problème inverse, c'est à dire aux extensions vérifiant des propriétés de descente pour l'équivalence modulo $p$.

Un groupe semisimple $G$ est dit \emph{de type intérieur} s'il est forme intérieure d'un groupe déployé. Il existe une extension finie, minimale et Galoisienne -définie de manière unique à isomorphisme près- sur laquelle $G$ devient de type intérieur. Si le degré de cette extension est une puissance d'un nombre premier $p$, on dit alors que $G$ est \emph{$p$-intérieur} (notons par exemple qu'un groupe intérieur est alors $p$-intérieur pour tout nombre premier $p$).

Dans toute la suite de cette section, on fixe un groupe semisimple $p$-intérieur $G$ et on note $F_{int}/F$ une extension minimale sur laquelle $G$ est de type intérieur. On dit qu'une $F$-variété $X$ est une \emph{quasi-$G$-variété de drapeaux généralisée} s'il existe une tour d'extensions $F_{int}/L/F$ et une $G_L$-variété de drapeaux généralisée $Y$ dont $X$ est la corestriction au corps de base.

\begin{defi}
Soit $X$ une $G$-variété de drapeaux généralisée. Une extension $E/F$ est \emph{pondérée pour $X$ modulo $p$} si pour toutes quasi-$G$-variétés de drapeaux $Y$ et $Y'$ dominant $X$ modulo $p$, $Y$ et $Y'$ sont équivalentes modulo $p$ si et seulement si $Y_E$ et $Y'_E$ le sont.
\end{defi}

On dit simplement qu'une extension $E/F$ est \emph{pondérée} pour $X$ si elle l'est modulo tout nombre premier. Bien que moins élémentaire, le contexte dans lequel la notion d'extension pondérée est la plus limpide est celui des \emph{motifs supérieurs} : les extensions pondérées pour $X$ sont les extensions qui respectent les motifs indécomposables apparaissant dans la décomposition du motif de $X$ (on renvoit à la section suivante pour plus de détails). La classe des extensions pondérées pour une variété $X$ modulo $p$ est visiblement stable par sous-extension et par extensions successives. Une extension unirationnelle est pondérée pour toute variété $X$, en vertu de l'invariance par homotopie des groupes de Chow. Le résultat suivant est essentiellement contenu dans \cite{kar1}.

\begin{prop}\label{pond}
Le corps des fonctions d'une $G$-variété de drapeaux généralisée $X$ est une extension pondérée pour $X$.
\end{prop}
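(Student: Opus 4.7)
Mon plan consiste à réduire la preuve à la construction d'une correspondance $Y \leadsto Y'$ de multiplicité première à $p$. Le sens direct étant trivial, soient $Y, Y'$ deux quasi-$G$-variétés de drapeaux dominant $X$ modulo $p$ et telles que $Y_{F(X)}$ et $Y'_{F(X)}$ soient équivalentes modulo $p$. Par symétrie des hypothèses, il suffira de produire une correspondance $Y \leadsto Y'$ de multiplicité première à $p$, l'argument symétrique fournissant la correspondance inverse.

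Je partirais d'une correspondance $\gamma : Y_{F(X)} \leadsto Y'_{F(X)}$ de multiplicité première à $p$, et je la relèverais, grâce à la suite exacte de localisation pour les groupes de Chow, en $\tilde{\gamma} \in \CH_{\dim Y + \dim X}(Y \times X \times Y')$, que l'on voit comme une correspondance $Y \times X \leadsto Y'$ de degré $0$. La multiplicité de cette correspondance vaut $\mult(\gamma)$ : en effet la poussée directe le long de la projection $Y \times X \times Y' \to Y \times X$ commute à la restriction à la fibre générique au-dessus de $X$, et $\mult(\tilde{\gamma}) \cdot [Y \times X]$ se restreint en $\mult(\gamma) \cdot [Y_{F(X)}]$. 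On utilise ici l'irréductibilité de $Y \times X$, conséquence de l'irréductibilité géométrique des variétés considérées (la Weil restriction d'une variété géométriquement irréductible restant géométriquement irréductible).

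Il resterait alors à exploiter l'hypothèse que $Y$ domine $X$ modulo $p$. Soit $\beta : Y \leadsto X$ une correspondance de multiplicité première à $p$. Je poserais $\beta' = \iota_{\ast} \beta$, où $\iota : Y \times X \to Y \times Y \times X$ est l'immersion fermée diagonale $(y, x) \mapsto (y, y, x)$. Alors $\beta'$ est une correspondance $Y \leadsto Y \times X$ de multiplicité encore première à $p$, puisque $\iota$ suivi de la projection sur le premier facteur coïncide avec la projection $Y \times X \to Y$. La composition $\tilde{\gamma} \circ \beta' : Y \leadsto Y'$ est alors de multiplicité $\mult(\tilde{\gamma}) \cdot \mult(\beta')$, donc première à $p$, ce qui achève la preuve.

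L'ingrédient clé de ce plan est l'astuce de diagonalisation via $\iota$, qui permet de transformer la domination $Y \leadsto X$ en une correspondance $Y \leadsto Y \times X$ composable avec $\tilde{\gamma}$. Sans cette manipulation, le relèvement $\tilde{\gamma}$ et la domination de $X$ par $Y$ ne pourraient être combinés pour produire une correspondance $Y \leadsto Y'$ sur le corps de base. La difficulté principale est donc de reconnaître cette construction comme le bon recollement permettant d'éliminer le facteur $X$ tout en préservant la primalité à $p$ des multiplicités en jeu. L'argument, dans sa substance, est celui développé par Karpenko dans \cite{kar1}.
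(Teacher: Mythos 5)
Your proposal is correct, but it follows a genuinely different route from the paper's own proof. You argue entirely by correspondence calculus: lift $\gamma : Y_{F(X)} \leadsto Y'_{F(X)}$ across the localization exact sequence to a correspondence $\tilde{\gamma}: Y \times X \leadsto Y'$ of the same multiplicity, push $\beta : Y \leadsto X$ forward along the partial diagonal to get $\beta': Y \leadsto Y \times X$, and compose. The multiplicativity of multiplicities then produces the required dominance $Y \leadsto Y'$ over $F$, and the equivalence follows by symmetry. This argument is purely formal and uses nothing about $X$ beyond the fact that $Y \times X$ is integral; it would apply verbatim to any smooth proper geometrically irreducible base. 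The paper's proof, by contrast, exploits the geometry of flag varieties: from the dominance $Y \leadsto X$ it extracts a closed point of $X_{F(Y)}$ with residue field $E$ of degree prime to $p$ over $F(Y)$, notes that $X_E$ has a rational point and hence is rational (this is exactly where the flag-variety hypothesis is used), so that $F(X_E)/E$ is purely transcendental, and then uses homotopy invariance of Chow groups to descend the zero-cycle of degree prime to $p$ on $Y'$ from $F(X_E)$ to $E$ and thence, by transfer, to $F(Y)$. Both proofs are valid: yours is more elementary and more general (and, as you note, closer in spirit to Karpenko's original argument in \cite{kar1}), while the paper's is shorter because it can invoke the rationality of isotropic flag varieties in place of the lifting and diagonalization manoeuvres.
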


\begin{proof}
Fixons un nombre premier $l$ et considérons deux quasi-$G$-variétés de drapeaux généralisées $Y$ et $Y'$ qui dominent $X$ modulo $l$. La variété $X$ possède sur le corps des fonctions de $Y$ un point fermé dont le corps résiduel $E$ est une extension de degré premier à $l$. Le corps des fonctions de la $G_E$-variété de drapeaux généralisée $X_E$ est donc une extension transcendante pure de $E$.

Supposons de plus que $Y$ et $Y'$ soient équivalentes modulo $l$ sur le corps des fonctions de $X$. Par hypothèse, la variété $Y'$ possède un zéro-cycle de degré premier à $l$ sur l'extension composée des corps de fonctions de $X$ et $Y$, donc a fortiori sur le corps des fonctions de $X_E$. L'invariance par homotopie des groupes de Chow implique donc que $Y'$ possède un zéro-cycle de degré premier à $l$ sur $E$, c'est à dire que $Y$ domine $Y'$ modulo $l$. Le même raisonnement les rôles de $Y$ et $Y'$ étant intervertis permet de conclure.
\end{proof}

\section{Motifs, décompositions}

On s'appuie principalement sur \cite{EKM,manin} pour ces quelques rappels sur la catégorie $\Mot(F,\Lambda)$ des motifs de Grothendieck construits à l'aide des groupes de Chow à coefficients dans un anneau commutatif $\Lambda$.

Dans un premier temps on considère la catégorie des correspondances, qui est obtenue à partir de celle des $F$-variétés projectives lisses en remplaçant les morphismes de variétés par les correspondances graduées à coefficients dans $\Lambda$, munies de leur composition habituelle. La catégorie ainsi obtenue est préadditive, et la catégorie des motifs $\Mot(F,\Lambda)$ est obtenue à partir de celle-ci par complétion additive, puis Karoubienne.

La catégorie $\Mot(F,\Lambda)$ est tensorielle, et ses objets sont donc des sommes directes finies de triplets $(X,\pi)[i]$, où $X$ est une $F$-variété projective lisse irréductible, $\pi$ est un projecteur des endomorphismes de $X$, c'est à dire une correspondance de degré $0$ de $X$ vers lui-même idempotente, et $i$ est un entier. Le motif $M(X)$ d'une variété $X$ est par définition le triplet $(X,\Gamma_{id_X})[0]$, où $\Gamma_{id_X}$ est la classe du graphe de l'identité de $X$. Pour alléger les notations on omet très régulièrement le graphe de l'identité en mentionnant le motif de $X$ par la suite.

Un exemple fondamental est celui des \emph{motifs de Tate}, qui sont les motifs décalés du point. Comme d'usage, un entier $i$ étant fixé, le motif de Tate $\Spec(F)[i]$ sera noté $\Lambda[i]$.

\begin{defi}
Soit $G$ un groupe algébrique semisimple, $\Theta$ un sous ensemble des sommets de son diagramme de Dynkin et $F_{\Theta}/F$ une extension minimale sur laquelle $\Theta$ est $\ast$-invariant. Le \emph{motif standard de $G$ de type $\Theta$ à coefficients dans $\Lambda$}, noté $M_{\Theta,G}$, est le motif à coefficients dans $\Lambda$ de la corestriction de la variété $X_{\Theta,G_{F_{\Theta}}}$ au corps de base.
\end{defi}

Si $\Theta$ est invariant sous l'action du groupe de Galois -c'est le cas par exemple lorsque $G$ est de type intérieur- le motif standard de type $\Theta$ de $G$ n'est autre que le motif d'une $G$-variété de drapeaux généralisée de ce type. Dans la suite on omet fréquemment de préciser l'anneau des coefficients des motifs standard que l'on considère, dans un soucis de légèreté. Ce choix est primordial, ne serait-ce que pour utiliser la propriété de Krull-Schmidt, c'est à dire l'existence et l'unicité des décompositions motiviques. En effet d'après Chernousov et Merkurjev \cite[Example 32]{chermer} (voir aussi le \cite[Corollary 2.7]{chowmottwistflag}) cette propriété n'est par exemple pas vérifiée à coefficients entiers. Ces décompositions motiviques sont fondamentales pour l'étude de l'équivalence motivique et requièrent certaines hypothèses -notamment de finitude sur l'anneau des coefficients- comme le montrent \cite{chermer,upper}. 

Notons que si l'on néglige la torsion des groupes de Chow, par exemple en considérant les motifs à coefficients rationnels, tous les motifs standards sont \emph{déployés}, c'est à dire somme directes de motifs de Tate. Deux groupes semisimples de même type sont ainsi toujours motiviquement équivalents à coefficients rationnels, c'est une conséquence des décompositions motiviques obtenues par Köck \cite{kock}. 

Fixons désormais un nombre premier $p$ et notons $\Mot_G(F,\mathbb{F}_p)$ la sous-catégorie épaisse de $\Mot(F,\mathbb{F}_p)$ engendrée par les facteurs directs des motifs de quasi-$G$-variétés de drapeaux généralisées. En vertu de \cite[Corollary 35]{chermer}, \cite[Corollary 2.6]{upper} la catégorie $\Mot(F,\mathbb{F}_p)$ est de Krull-Schmidt.

Notons que par \cite{decvarprojcoeff,vishyag}, tous les résultats obtenus dans cet article à coefficients dans $\mathbb{F}_p$ s'étendent à tout anneau fini, local et dont le corps résiduel est de caractéristique $p$. Il en va en fait de même pour tout corps de caractéristique $p$, sous réserve de Krull-Schmidt.

\section{Chirurgie des motifs de variétés de drapeaux}

On fixe désormais un nombre premier $p$, et tous les groupes semisimples considérés par la suite seront suppposés $p$-intérieurs. Comme vu précédemment, le motif à coefficients dans un corps à $p$ éléments d'une variété de drapeaux généralisée admet une décomposition essentiellement unique. La théorie des motifs supérieur a pour objectif de décrire et classifier les motifs indécomposables qui apparaissent dans ces décompositions.\\
Si $X$ est une quasi-$G$-variété de drapeaux généralisée, il existe à isomorphisme près un unique facteur indécomposable du motif de $X$ à coefficient dans $\mathbb{F}_p$ qui contient le motif de Tate $\mathbb{F}_p[0]$ après extension des scalaires. Ce motif est noté $U_X$ et appelé le \emph{motif supérieur} de $X$.

Si $\Theta$ est un sous-ensemble des sommets du diagramme de Dynkin d'un groupe semisimple $G$, le \emph{motif supérieur standard de $G$ de type $\Theta$}, noté $U_{\Theta,G}$, est le motif supérieur du motif standard $M_{\Theta,G}$. Le théorème de structure de Karpenko \cite[Theorem 1.1]{outer} stipule que l'ensemble $U^p_G$ des motifs supérieurs standards décalés de $G$ à coefficients dans $\mathbb{F}_p$ décrit tous les motifs indécomposables de $\Mot_G(F;\mathbb{F}_p)$.

\begin{defi}
Soit $p$ un nombre premier, $G$ un groupe semisimple $p$-intérieur et $M$ un objet de $\Mot_G(F;\mathbb{F}_p)$. L'\emph{application caractéristique} $\chi_{M}:U^p_G\times \mz\rightarrow \mn$ de $M$ est l'application qui associe à tout couple $(U_{\Theta,G},i)$ le nombre de facteurs directs de $M$ isomorphes à $U_{\Theta,G}[i]$.
\end{defi}

Tout motif $M$ de $\Mot_G(F,\mathbb{F}_p)$ est visiblement déterminé de manière unique à isomorphisme près par son application caractéristique. Pour tout entier $i$, la \emph{tranche supérieure à $i$} de $M$, notée $M^{\geq i}$, est le motif dont l'application caractéristique prend les mêmes valeurs que $\chi_M$ pour tout couple $(U_{\Theta,G},j)$ lorsque $i$ est inférieur ou égal à $j$, et nulle partout ailleurs. On définit de manière similaire la \emph{tranche inférieure à $i$} de $M$, ainsi que les tranches supérieures et inférieures strictes à $i$ de $M$ en remplacant l'inégalité $i\leq j$ par $j\leq i$, $i>j$ et $i<j$, respectivement. Ces motifs sont sans surprise notés respectivement $M^{\leq i}$, $M^{>i}$ et $M^{< i}$.\\

Nous l'avons vu précédemment, la notion d'extension pondérée fournit une classe d'extensions de corps qui respecte les classes d'équivalence des quasi-$G$-variétés de drapeaux. Ces classes d'équivalences sont en correspondance bijective avec les classes d'isomorphismes de motifs supérieurs standards de $G$, c'est l'objet de \cite[Corollary 2.15]{upper}. La proposition suivante permet de montrer comment partiellement reconstruire le motif d'une $G$-variété de drapeau généralisée à partir de son motif sur une extension pondérée.

\begin{prop}\label{calcul}
Soit $X$ une variété de drapeaux généralisée de type $\Theta$ d'un groupe $p$-intérieur et $E/F$ une extension pondérée pour $X$ modulo $p$. Pour toute quasi-$G$-variété de drapeaux $Y$ dominant $X$ modulo $p$ et pour tout entier $i$, on a
$$\chi_{M(X)}(U_Y,i)=\chi_{M(X_E)}(U_{Y_E},i)-\chi_{(M(X)^{<i})_E}(U_{Y_E},i).$$
\end{prop}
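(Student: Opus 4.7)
La strat�gie consistera � d�composer $M(X)=M(X)^{<i}\oplus M(X)^{\geq i}$, � exploiter l'additivit� des applications caract�ristiques dans la cat�gorie de Krull-Schmidt $\Mot_G(E,\mathbb{F}_p)$ apr�s extension des scalaires, puis � invoquer la propri�t� d�finissant les extensions pond�r�es. Concr�tement, la d�composition ci-dessus donnera, apr�s extension � $E$, l'�galit�
$$\chi_{M(X_E)}(U_{Y_E},i)=\chi_{(M(X)^{<i})_E}(U_{Y_E},i)+\chi_{(M(X)^{\geq i})_E}(U_{Y_E},i),$$
et l'�nonc� se r�duira alors � v�rifier l'identit� $\chi_{M(X)}(U_Y,i)=\chi_{(M(X)^{\geq i})_E}(U_{Y_E},i)$.

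Pour �tablir cette derni�re, on partira d'une d�composition motivique $M(X)^{\geq i}=\bigoplus_{\alpha}U_{Z_\alpha}[r_\alpha]$, avec $r_\alpha\geq i$ et les $Z_\alpha$ des quasi-$G$-vari�t�s de drapeaux dominant $X$ modulo $p$ (en vertu du th�or�me de structure de Karpenko \cite[Theorem 1.1]{outer}). L'observation centrale sera la suivante : pour chaque $\alpha$, la d�composition en facteurs ind�composables de $(U_{Z_\alpha})_E$ admet $U_{(Z_\alpha)_E}$ au d�calage $0$ avec multiplicit� un, tandis que tous les autres facteurs ind�composables sont d�cal�s d'entiers au moins �gaux � $1$. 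Ce point r�sultera de ce que, par d�finition m�me du motif sup�rieur, le d�ploiement complet de $U_{Z_\alpha}$ ne contient qu'un unique motif de Tate $\mathbb{F}_p[0]$. Il s'en d�duira qu'un facteur ind�composable de $(M(X)^{\geq i})_E$ isomorphe � $U_{Y_E}[i]$ ne peut provenir que d'un sous-facteur $U_{(Z_\alpha)_E}[r_\alpha]$ avec $r_\alpha=i$, les autres contributions �tant d�cal�es d'entiers strictement sup�rieurs � $i$.

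On conclura en appliquant \cite[Corollary 2.15]{upper}, qui identifie les classes d'�quivalence modulo $p$ des quasi-$G$-vari�t�s de drapeaux aux classes d'isomorphisme de leurs motifs sup�rieurs : le caract�re pond�r� de $E/F$ pour $X$ modulo $p$ assurera alors l'�quivalence $U_{(Z_\alpha)_E}\cong U_{Y_E}$ si et seulement si $U_{Z_\alpha}\cong U_Y$, et le compte co�ncidera pr�cis�ment avec $\chi_{M(X)^{\geq i}}(U_Y,i)=\chi_{M(X)}(U_Y,i)$. L'obstacle principal me para�t r�sider dans la v�rification rigoureuse de l'observation centrale sur les d�calages des facteurs ind�composables de $(U_{Z_\alpha})_E$ : elle articule la d�finition du motif sup�rieur et le th�or�me de Krull-Schmidt, et demande de s'assurer que l'unicit� du motif de Tate $\mathbb{F}_p[0]$ dans un motif sup�rieur est effectivement pr�serv�e par extension des scalaires.
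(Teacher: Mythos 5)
Votre preuve suit pas � pas celle de l'article : m�me d�composition $M(X)=M(X)^{<i}\oplus M(X)^{\geq i}$, m�me r�duction � l'identit� $\chi_{M(X)}(U_Y,i)=\chi_{(M(X)^{\geq i})_E}(U_{Y_E},i)$, m�me recours au th�or�me de structure de Karpenko puis � la propri�t� d'extension pond�r�e. Votre \og observation centrale\fg{} sur les d�calages des facteurs de $(U_{Z_\alpha})_E$ explicite simplement ce que l'article exp�die par la phrase \og par d�finition de $M(X)^{\geq i}$ les entiers $i$ et $j$ sont visiblement �gaux\fg{}, et votre scrupule final est bien r�solu comme vous le sugg�rez : la multiplicit� du motif de Tate $\mathbb{F}_p[0]$ dans le d�ploiement complet d'un motif sup�rieur est une donn�e g�om�trique ind�pendante du corps de base interm�diaire.
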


\begin{proof}
Observons tout d'abord que la décomposition motivique $M(X)=M(X)^{\geq i}\oplus M(X)^{<i}$ évidente donne lieu à l'égalité
$$\chi_{M(X_E)}(U_{Y_E},i)= \chi_{(M(X)^{\geq i})_E}(U_{Y_E},i)+\chi_{(M(X)^{<i})_E}(U_{Y_E},i).$$

Il s'agit donc de montrer que $\chi_{M(X)}(U_{Y},i)=\chi_{(M(X)^{\geq i})_E}(U_{Y_E},i)$. Considérons un facteur direct $M$ du motif $(M(X)^{\geq i})_E$ qui soit isomorphe à $U_{Y_E}[i]$. Le motif $M$ provient d'un facteur direct motivique indécomposable sur $F$ de $M(X)^{\geq i}$ que l'on note $N$, par Krull-Schmidt.

Le motif $N$ est isomorphe au motif supérieur décalé $U_{Z}[j]$ d'une quasi-$G$-variété de drapeaux généralisée $Z$ qui domine $X$ modulo $p$, par le théorème de structure de Karpenko. Par définition de $M(X)^{\geq i}$ les entier $i$ et $j$ sont visiblement égaux, et quitte à les décaler convenablement, on peut les supposer nuls. Le motif $N_E$ n'est autre que le motif supérieur de la $E$-variété $Z_E$. En particulier, la propriété de Krull-Schmidt sur l'extension $E$ implique que les motifs supérieurs $U_{Y_E}$ et $U_{Z_E}$ sont isomorphes, autrement dit $Y_E$ et $Z_E$ sont équivalentes modulo $p$. L'extension $E/F$ étant supposée pondérée pour $X$ modulo $p$, les $F$-variétés $Y$ et $Z$ sont équivalentes modulo $p$, et leurs motifs supérieurs $U_Y$ et $U_{Z}$ sont de fait isomorphes, d'où l'égalité recherchée.
\end{proof}

\section{Equivalence motivique et motifs supérieurs}

Nous l'avons vu précédemment, la classification des groupes semisimples montre par la détermination des indices de Tits le lien qu'entretiennent les classes d'isomorphismes de groupes semisimples et l'isotropie des variétés de drapeaux généralisées. Le résultat suivant peut être vu comme un analogue motivique de ces résultats et sera essentiel pour construire les données combinatoires caractérisant l'équivalence motivique.\\
La démonstration du critère suivant requiert plusieurs étapes. On traite premièrement des variétés de drapeaux généralisées isotropes, cas reposant essentiellement sur les décompositions de type Levi de Chernousov, Gille et Merkurjev. Le cas général est ensuite déduit du cas isotrope par construction d'une bonne extension pondérée et descente au corps de base. Notons que le passage aux décompositions de type Levi ne préserve que très rarement le caractère absolument simple des groupes, et encore moins leur type. Le résultat suivant doit donc structurellement être démontré pour tous les groupes semisimples à corps de base fixé, indépendamment de leur type.

\begin{thm}\label{classgeoalg}
Soient $G$ et $G'$ deux groupes semisimples sur $F$, tous deux formes intérieures d'un même groupe quasi-déployé. Les groupes $G$ et $G'$ sont motiviquement équivalents modulo $p$ si et seulement s'ils sont équivalents modulo $p$.
\end{thm}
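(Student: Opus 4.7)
La preuve se scinde en deux implications. Pour la direction facile, si $G$ et $G'$ sont motiviquement �quivalents modulo $p$, alors pour tout sous-ensemble $\ast$-invariant $\Theta$ on dispose d'un isomorphisme $M(X_{\Theta,G})\cong M(X_{\Theta,G'})$ dans $\Mot(F,\mathbb{F}_p)$. La propri�t� de Krull--Schmidt de $\Mot_G(F,\mathbb{F}_p)$ et le th�or�me de structure de Karpenko imposent alors l'isomorphisme des motifs sup�rieurs $U_{X_{\Theta,G}}\cong U_{X_{\Theta,G'}}$, ce qui via \cite[Corollary 2.15]{upper} �quivaut pr�cis�ment � l'�quivalence modulo $p$ des deux vari�t�s.

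Pour la direction inverse, on proc�de par r�currence sur le rang de $G$, en isolant d'abord le cas isotrope. Si $G$ poss�de un parabolique propre d�fini sur $F$ de type $\Theta_0\subseteq\delta_0(G)$, le th�or�me de Chernousov--Gille--Merkurjev d�crit, pour tout sous-ensemble $\Theta$, le motif de $X_{\Theta,G}$ � coefficients dans $\mathbb{F}_p$ comme somme directe finie de motifs d�cal�s de vari�t�s de drapeaux g�n�ralis�es de $G_{\Theta_0}$, index�s par certaines doubles classes de Weyl ; une description analogue vaut pour $G'$. Apr�s avoir v�rifi� que l'�quivalence modulo $p$ force la pr�sence d'une orbite distingu�e commune � $G$ et $G'$, la proposition \ref{prop5} garantit que $G_{\Theta_0}$ et $G'_{\Theta_0}$ sont �quivalents modulo $p$. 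Leur rang �tant strictement plus petit, l'hypoth�se de r�currence livre leur �quivalence motivique modulo $p$, et l'appariement combinatoire des facteurs de Levi conduit � $M_{\Theta,G}\cong M_{\Theta,G'}$.

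Pour ramener le cas g�n�ral au cas isotrope, on fixe un $\Theta$ $\ast$-invariant et on pose $E = F(X_{\Theta,G})$. La proposition \ref{pond} assure que $E$ est pond�r�e pour $X_{\Theta,G}$ modulo $p$ ; sur $E$, le groupe $G_E$ acquiert un parabolique rationnel de type $\Theta$, donc est isotrope, tandis que $G'_E$ le devient quitte � op�rer une seconde extension pond�r�e. Le cas isotrope appliqu� sur cette extension fournit $M(X_{\Theta,G})_E\cong M(X_{\Theta,G'})_E$. On remonte alors au corps de base par r�currence descendante sur $i$ : la formule de la proposition \ref{calcul} exprime $\chi_{M(X_{\Theta,G})}(U_Y,i)$ en fonction de $\chi_{M(X_{\Theta,G})_E}(U_{Y_E},i)$ et des tranches strictement plus basses, d�j� connues inductivement. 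La formule analogue pour $G'$, jointe � l'appariement $Y\leftrightarrow Y'$ des quasi-vari�t�s de drapeaux �quivalentes modulo $p$ et � l'identification $U_{Y_E}\cong U_{Y'_E}$ (cons�quence du caract�re pond�r� de $E$), donne l'�galit� des applications caract�ristiques, d'o� l'isomorphisme recherch� des motifs standards.

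Le principal obstacle r�side dans le contr�le combinatoire pr�cis de la d�composition de Chernousov--Gille--Merkurjev et dans l'identification, apr�s passage � l'extension pond�r�e, d'une structure d'isotropie commune aux deux groupes permettant une application uniforme du cas isotrope. Il faudra �tablir soigneusement la bijection entre les strates de Levi des motifs de $G$ et $G'$ rendant significatif l'appariement inductif des facteurs, et s'assurer que la descente par la proposition \ref{calcul} respecte la correspondance entre quasi-vari�t�s de drapeaux �quivalentes modulo $p$ tout au long de la r�currence sur le rang.
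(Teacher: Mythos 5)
Votre esquisse suit pour l'essentiel la m\^eme strat\'egie que l'article (sens facile via Krull--Schmidt et le th\'eor\`eme de structure de Karpenko, r\'ecurrence sur le rang, d\'ecompositions de type Levi de Chernousov--Gille--Merkurjev dans le cas isotrope, puis descente via une extension pond\'er\'ee et la proposition~\ref{calcul}). Quelques points m\'eritent cependant d'\^etre resserr\'es.

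Le point le plus d\'elicat est le choix de l'extension $E$ et la v\'erification qu'elle est pond\'er\'ee. L'article prend $E$ \'egale au compositum des corps des fonctions de $X_{\Theta,G}$ \emph{et} de $X_{\Theta,G'}$, ce qui rend sym\'etriquement isotropes $G_E$ et $G'_E$, puis d\'emontre explicitement que ce compositum est pond\'er\'e pour $X_{\Theta,G}$ modulo~$p$ : l'argument exploite l'hypoth\`ese (\`a prouver inverse !) que $G$ et $G'$ sont \'equivalents modulo~$p$ pour transf\'erer la dominance de $X_{\Theta,G}$ vers $X_{\Theta,G'}$ et donc vers le produit $X_{\Theta,G}\times X_{\Theta,G'}$, auquel on applique la proposition~\ref{pond}. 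Vous prenez $E=F(X_{\Theta,G})$ puis ajoutez une \og seconde extension pond\'er\'ee \fg{} sans pr\'eciser laquelle ni pourquoi le compos\'e reste pond\'er\'e \emph{pour $X_{\Theta,G}$} mod~$p$ (la pond\'eration de $F(X_{\Theta,G'_E})/E$ vaut a priori pour $X_{\Theta,G'_E}$, pas pour $X_{\Theta,G_E}$; il faut invoquer l'\'equivalence mod~$p$ des deux vari\'et\'es pour transf\'erer). Sans cette v\'erification, l'application de la proposition~\ref{calcul} n'est pas l\'egitime, et c'est pr\'ecis\'ement l\`a que l'article concentre son effort.

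Deux remarques secondaires : la descente au corps de base se fait par r\'ecurrence \emph{ascendante} sur $i$ (ou par minimalit\'e d'un contre-exemple $i_0$), la formule de la proposition~\ref{calcul} exprimant $\chi_{M(X)}(\cdot,i)$ en fonction des tranches $j<i$; l'expression \og r\'ecurrence descendante \fg{} pr\^ete \`a confusion. Enfin l'\og appariement $Y\leftrightarrow Y'$ \fg{} que vous invoquez est superflu : on compare, pour $Y$ fix\'ee, $\chi_{M_{\Theta,G}}(U_Y,i)$ et $\chi_{M_{\Theta,G'}}(U_Y,i)$, et c'est l'isomorphisme $(M_{\Theta,G})_E\cong(M_{\Theta,G'})_E$ (obtenu par le cas isotrope) joint \`a la co\"incidence des tranches inf\'erieures qui force l'\'egalit\'e. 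Le c\oe ur de l'argument que vous identifiez comme \og obstacle principal \fg{} -- le fait que les types et d\'ecalages des facteurs de Levi ne d\'ependent que du type de $G$ et $G'$ -- est effectivement le point o\`u l'article s'appuie sur la forme explicite de \cite[Theorem 7.4]{chermergil}, et vous avez raison de le souligner.
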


\begin{proof}
Si $G$ et $G'$ sont motiviquement équivalents modulo $p$, les motifs supérieurs standards $U_{\Theta,G}$ et $U_{\Theta,G'}$ sont visiblement isomorphes, pour tout sous-ensemble $\ast$-invariant de leur diagramme de Dynkin. Les variétés de drapeaux généralisées de type fixé sont donc équivalentes modulo $p$.

On démontre le sens réciproque pour l'ensemble des groupes semisimples sur $F$, par récurrence sur le rang commun de $G$ et $G'$,  le résultat étant trivial s'ils sont tous deux de rang nul. Supposons donc $G$ et $G'$ sont de rang strictement positif, et fixons un sous-ensemble $\Theta$ de $\Delta$ invariant sous l'action du groupe de Galois absolu, excluant le cas évident où $\Theta$ est vide.

Il s'agit donc de montrer que les motifs standards $M_{\Theta,G}$ et $M_{\Theta,G'}$ sont isomorphes dans $\Mot(F;\mathbb{F}_p)$. On commence par traiter le cas où $\Theta$ est invariant pour l'action du groupe de Galois absolu, et les variétés de drapeaux généralisées de type $\Theta$ associées à $G$ et $G'$ ont un point rationnel. Dans ce cas d'après théorème de décomposition de type Levi des variétés de drapeaux généralisées \cite[Theorem 7.4]{chermergil}, les motifs $M_{\Theta,G}$ et $M_{\Theta,G'}$ se décomposent en une somme directe de motifs standards de $G_{\Theta}$ et $G'_{\Theta}$, respectivement. En outre, les types de ces motifs ainsi que les décalages qui apparaissent dans ces décompositions ne dépendent que du type de $G$ et $G'$. Les groupes semisimples $G_{\Theta}$ et $G'_{\Theta}$ étant de même type et de rang strictement inférieur à $G$ et $G'$, l'hypothèse de récurrence et la proposition \ref{prop5} assurent qu'ils sont motiviquement équivalents modulo $p$. Les facteurs apparaissant dans les décompositions de type Levi de $M_{\Theta,G}$ et $M_{\Theta,G'}$ sont ainsi isomorphes deux à deux, et les motifs standards de type $\Theta$ de $G$ et $G'$ sont isomorphes.\\

On traite désormais le cas où la variété standard de type $\Theta$ n'a pas de point rationnel. Notons que l'extension composée $E/F$ des corps des fonctions des variétés $X_{\Theta,G}$ et $X_{\Theta,G'}$ est pondérée pour $X_{\Theta,G}$ modulo $p$. En effet, si $Y$ et $Y'$ sont deux variétés quasi-$G$-homogènes équivalentes modulo $p$ sur $E$ et qui dominent $X_{\Theta,G}$ modulo $p$, elles dominent aussi $X_{\Theta,G'}$, puisque $G$ et $G'$ sont équivalents modulo $p$. Elles dominent en particulier la $G\times G'$-variété de drapeaux généralisée $X_{\Theta,G}\times X_{\Theta,G'}$, de par le produit externe des cycles. En vertu de la proposition \ref{pond} l'extension $E/F$ est pondérée pour $X_{\Theta,G}\times X_{\Theta,G'}$, et les variétés $Y_E$ et $Y'_E$ étant équivalentes modulo $p$, il en va de même pour $Y$ et $Y'$.

Puisque les groupes $G_E$ et $G'_E$ possèdent des sous-groupes paraboliques de type $\Theta$, les motifs standards de type $\Theta$ de $G_E$ et $G_E'$ sont isomorphes d'après le le cas isotrope. Supposons par l'absurde que les applications caractéristiques des motifs $M_{\Theta,G}$ et $M_{\Theta,G'}$ diffèrent, en considérant un hypothétique entier $i_0$ minimal tel qu'il existe une quasi-$G$-variété de drapeaux généralisée $Y$ pour laquelle $\Phi_{M_{\Theta,G}}(U_Y,i_0)\neq \Phi_{M_{\Theta,G'}}(U_Y,i_0)$. Le calcul de la proposition \ref{calcul} implique que

\begin{align*}
\Phi_{M_{\Theta,G}}(U_Y,i_0)&=\Phi_{(M_{\Theta,G})_E}(U_{Y_E},i_0)-\Phi_{(M_{\Theta,G}^{<i_0})_E}(U_{Y_E},i_0)\\
&=\Phi_{(M_{\Theta,G'})_E}(U_{Y_E},i_0)-\Phi_{(M_{\Theta,G'}^{<i_0})_E}(U_{Y_E},i_0)\\
&=\Phi_{M_{\Theta,G'}}(U_Y,i_0)
\end{align*}

ce qui contredit la définition de $i_0$. Les applications caractéristiques des motifs $M_{\Theta,G}$ et $M_{\Theta,G'}$ coincident donc, et ces motifs sont isomorphes. Le résultat lorsque $\Theta$ est quelconque se déduisant aisément du cas précédent en appliquant le foncteur de corestriction à une extension $F_{\Theta}/F$ minimale pour laquelle le sous-ensemble $\Theta$ est invariant pour l'action du groupe de Galois absolu sur le diagramme de Dynkin de $G$ et $G'$, ces groupes semisimples sont motiviquement équivalents modulo $p$. 
\end{proof}

Le résultat précédent appliqué pour les groupes intérieurs à tout nombre premier $p$ permet d'obtenir un premier critère d'équivalence motivique.

\begin{cor}\label{inner}
Deux groupes semisimples intérieurs et de même type sont motiviquement équivalents si et seulement si pour tout sous-ensemble de leur diagramme de Dynkin, les variétés standards respectives de type fixé sont équivalentes. 
\end{cor}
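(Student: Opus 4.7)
Le plan consistera essentiellement � d�river ce corollaire du th�or�me \ref{classgeoalg} via deux observations auxiliaires de nature �l�mentaire. D'une part, je noterais que pour deux $F$-vari�t�s propres $X$ et $Y$, l'ensemble des multiplicit�s des correspondances de Chow de degr� $0$ de $X$ vers $Y$ est un sous-groupe de $\mathbb{Z}$, donc un id�al $n\mathbb{Z}$. On a alors que $X$ domine $Y$ si et seulement si $n=1$, ce qui �quivaut � demander que $n$ soit premier � $p$ pour tout nombre premier $p$, autrement dit que $X$ domine $Y$ modulo $p$ pour tout $p$. Par cons�quent deux vari�t�s propres sont �quivalentes si et seulement si elles sont �quivalentes modulo tout nombre premier.

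D'autre part, je rappellerais que pour un groupe semisimple int�rieur $G$, la $\ast$-action du groupe de Galois absolu sur le diagramme de Dynkin est triviale -puisque $G$ est forme int�rieure d'un groupe d�ploy�-, et tout sous-ensemble de $\Delta(G)$ est donc automatiquement $\ast$-invariant. De plus, comme signal� dans la section V, un groupe int�rieur est $p$-int�rieur pour tout nombre premier $p$, ce qui rend l�gitime l'application du th�or�me \ref{classgeoalg} pour chaque $p$.

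Il suffirait alors d'encha�ner ces observations avec le th�or�me \ref{classgeoalg} et la d�finition d'�quivalence motivique. Par d�finition, $G$ et $G'$ sont motiviquement �quivalents si et seulement s'ils le sont modulo tout nombre premier $p$, ce qui �quivaut par le th�or�me \ref{classgeoalg} � leur �quivalence modulo $p$ pour tout $p$. Puisque tout sous-ensemble $\Theta$ du diagramme de Dynkin est $\ast$-invariant, cette derni�re condition se traduit en : pour tout $\Theta$ et tout $p$, les vari�t�s $X_{\Theta,G}$ et $X_{\Theta,G'}$ sont �quivalentes modulo $p$. La premi�re observation permet de r�assembler ces �quivalences modulo $p$ en une �quivalence enti�re pour chaque $\Theta$ fix�, ce qui fournit le crit�re annonc�. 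L'argument n'offrira pas de r�elle difficult� : toute la substance motivique est port�e par le th�or�me \ref{classgeoalg}, et la seule subtilit� technique r�side dans la traduction entre �quivalence enti�re et �quivalence modulo chaque nombre premier.
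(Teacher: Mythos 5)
Your proposal is correct and follows the same route as the paper, which simply applies Theorem \ref{classgeoalg} over all primes $p$. You have usefully made explicit a detail the paper leaves implicit: that the multiplicities of degree-zero correspondences from $X$ to $Y$ form an ideal $n\mathbb{Z}$ (the image of the pushforward homomorphism $\pi_\ast$), so that dominance modulo every prime recovers integral dominance, hence equivalence modulo every prime recovers equivalence; together with the triviality of the $\ast$-action in the inner case, this yields the corollary exactly as stated.
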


Comme expliqué précédemment, les classes d'équivalence modulo $p$ des $G$-variétés de drapeaux généralisées correspondant aux classes d'isomorphismes de motifs supérieurs de $G$ à coefficients dans $\mathbb{F}_p$.

\begin{cor}
La classe d'équivalence motivique modulo $p$ d'un groupe semisimple $G$ est uniquement déterminée par son type et par l'ensemble de motifs supérieurs de $G$ à coefficients dans $\mathbb{F}_p$.
\end{cor}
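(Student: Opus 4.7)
Le plan consiste � d�duire ce corollaire directement du th�or�me \ref{classgeoalg} et de la correspondance rappel�e juste avant l'�nonc�, issue de \cite[Corollary 2.15]{upper}, entre les classes d'�quivalence modulo $p$ des quasi-$G$-vari�t�s de drapeaux g�n�ralis�es et les classes d'isomorphisme de motifs sup�rieurs de $G$ � coefficients dans $\mathbb{F}_p$. L'\emph{ensemble des motifs sup�rieurs} de $G$ doit �tre compris ici comme la famille $\{U_{\Theta,G}\}_{\Theta}$ index�e par les sous-ensembles $\ast$-invariants $\Theta$ du diagramme de Dynkin commun aux groupes consid�r�s, ce qui permet de raisonner $\Theta$ par $\Theta$.

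Plus pr�cis�ment, je fixerais deux groupes semisimples $G$ et $G'$ de m�me type, donc formes int�rieures d'un m�me groupe quasi-d�ploy�. Le sens direct est transparent : si $G$ et $G'$ sont motiviquement �quivalents modulo $p$, alors pour tout sous-ensemble $\ast$-invariant $\Theta$ les motifs standards $M_{\Theta,G}$ et $M_{\Theta,G'}$ sont isomorphes dans $\Mot(F;\mathbb{F}_p)$, et la propri�t� de Krull-Schmidt dans $\Mot_G(F;\mathbb{F}_p)$ entra�ne que leurs facteurs ind�composables contenant $\mathbb{F}_p[0]$ apr�s d�ploiement -- c'est-�-dire $U_{\Theta,G}$ et $U_{\Theta,G'}$ -- sont isomorphes. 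Ainsi l'ensemble des motifs sup�rieurs est-il bien un invariant de la classe d'�quivalence motivique modulo $p$.

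Pour la r�ciproque, je supposerais que pour chaque sous-ensemble $\ast$-invariant $\Theta$ on a $U_{\Theta,G}\cong U_{\Theta,G'}$. Par \cite[Corollary 2.15]{upper}, cet isomorphisme �quivaut � ce que les vari�t�s $X_{\Theta,G}$ et $X_{\Theta,G'}$ soient �quivalentes modulo $p$. Ceci valant pour tout $\Theta$, les groupes $G$ et $G'$ sont �quivalents modulo $p$ au sens de la section III, et le th�or�me \ref{classgeoalg} conclut � leur �quivalence motivique modulo $p$.

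Je n'anticipe aucun obstacle s�rieux : tout le contenu non trivial est d�j� empaquet� dans le th�or�me \ref{classgeoalg} d'une part et dans le th�or�me de structure de Karpenko \cite[Theorem 1.1]{outer} d'autre part, que l'on invoque via la correspondance \cite[Corollary 2.15]{upper}. La seule subtilit� � souligner est celle mentionn�e ci-dessus sur l'interpr�tation du mot \og ensemble \fg, qu'il convient de lire comme la famille index�e par les $\Theta$ afin de pouvoir comparer les motifs sup�rieurs type par type.
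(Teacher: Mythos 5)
Votre preuve est correcte et suit exactement la voie que le texte sugg\`ere : le corollaire est une cons\'equence directe du th\'eor\`eme \ref{classgeoalg} combin\'e avec la correspondance de Karpenko \cite[Corollary 2.15]{upper} entre classes d'\'equivalence modulo $p$ des quasi-$G$-vari\'et\'es de drapeaux et classes d'isomorphisme de leurs motifs sup\'erieurs, et la remarque que vous faites sur la lecture de \og ensemble \fg\ comme famille index\'ee par $\Theta$ est pertinente.
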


\section{Indices de Tits supérieurs}

L'objectif de cette section est de construire les versions locales en $p$ des indices de Tits, les \emph{$p$-indices de Tits}, qui fournissent les données combinatoires que nous utilisons par la suite pour caractériser l'équivalence motivique des groupes semisimples.
\begin{defi}
Soit $p$ un nombre premier et $G$ un groupe semisimplesur un corps $F$. Le $p$-indice de Tits de $G$ est la donnée du diagramme de Dynkin de $G$ ainsi que d'un sous-ensemble $\delta_{0}^p(G)$ de ses sommets. Une orbite $\Theta$ de $\Delta(G)$ pour la $\ast$-action est contenue dans $\delta_{0}^p(G)$ si et seulement si une $G$-variété de drapeaux généralisée de type $\Theta$ devient isotrope sur une extension de degré premier à $p$ de $F$.
\end{defi}

L'ensemble $\delta_{0}^p(G)$ des orbites \emph{$p$-distinguées} peut être défini de manière équivalente comme l'union de orbites distinguées apparaissant dans les indices de Tits de $G$ sur toutes les extensions de degré premier à $p$ de son corps de définition. On dit que $G$ est quasi-$p$-déployé lorsque $\Delta(G)$ est lui-même une orbite $p$-distinguée, et le \emph{$p$-indice de Tits supérieur} de $G$ est le foncteur qui associe à toute extension $E/F$ du corps de base le $p$-indice de Tits du groupe semisimple $G_E$.

\begin{thm}\label{classtits}
Soient $G$ et $G'$ deux groupes semisimples sur $F$, tous deux formes intérieures d'un même groupe quasi-$p$-déployé. Les groupes $G$ et $G'$ sont motiviquement équivalents modulo $p$ si et seulement si leur $p$-indices de Tits supérieurs coincident.
\end{thm}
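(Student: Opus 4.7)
The plan is to combine Theorem \ref{classgeoalg}, which reduces motivic equivalence modulo $p$ to the equivalence modulo $p$ of all pairs of standard flag varieties of the same $\ast$-invariant type, with the Witt-type isotropy criterion recalled in Section IV: a generalized flag variety $X_{\Theta,G}$ has a rational point over $E$ if and only if $\Theta \subset \delta_0(G_E)$. Since $\delta_0^p$ is by definition built from the isotropy of generalized flag varieties over prime-to-$p$ extensions, the whole argument will be an interplay between function field extensions, multiplicativity of multiplicities, and the combinatorics of $\delta_0^p$.

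For the forward direction, motivic equivalence modulo $p$ is preserved by base change, so I fix an arbitrary extension $E/F$ and show $\delta_0^p(G_E)=\delta_0^p(G'_E)$. Over $E$, the groups $G_E$ and $G'_E$ remain inner forms of the quasi-$p$-split group obtained by base change, so Theorem \ref{classgeoalg} applies and yields equivalence modulo $p$ of $X_{\Theta,G_E}$ and $X_{\Theta,G'_E}$ for every $\ast$-invariant $\Theta$ over $E$. Given a $\ast$-orbit $\Theta \subset \delta_0^p(G_E)$, the variety $X_{\Theta,G_E}$ becomes isotropic over some finite prime-to-$p$ extension $E'/E$; equivalence modulo $p$ then produces a zero-cycle of degree prime to $p$ on $X_{\Theta,G'_{E'}}$, which forces isotropy of $X_{\Theta,G'_E}$ over a prime-to-$p$ extension, hence $\Theta \subset \delta_0^p(G'_E)$. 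The opposite inclusion is symmetric.

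For the converse, Theorem \ref{classgeoalg} again reduces the problem to showing that $X_{\Theta,G}$ and $X_{\Theta,G'}$ are equivalent modulo $p$ for every $\ast$-invariant $\Theta$; by symmetry it suffices to prove dominance in one direction. Set $E = F(X_{\Theta,G})$. The variety $X_{\Theta,G_E}$ has an $E$-rational point, so $\Theta \subset \delta_0(G_E) \subset \delta_0^p(G_E)$, and the hypothesis on upper $p$-Tits indices gives $\Theta \subset \delta_0^p(G'_E)$. Decomposing $\Theta = \Theta_1 \cup \cdots \cup \Theta_k$ into $\ast$-orbits and invoking the description of $\delta_0^p$ as the union of the $\delta_0(G'_{E_i})$ over prime-to-$p$ extensions $E_i/E$, I take the compositum $E' = E_1 \cdots E_k$, still a prime-to-$p$ extension of $E$; by monotonicity of $\delta_0$ under field extensions, $\Theta \subset \delta_0(G'_{E'})$, i.e.\ $X_{\Theta,G'_E}$ acquires a rational point over $E'$. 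This furnishes a zero-cycle of degree prime to $p$ on $X_{\Theta,G'}$ over $F(X_{\Theta,G})$, which by \cite[Lemma 75.1]{EKM} yields the desired dominance modulo $p$ over $F$.

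The main subtlety is precisely this last compositum step: the coincidence of upper $p$-Tits indices is only information orbit-by-orbit, whereas equivalence of $X_{\Theta,G}$ and $X_{\Theta,G'}$ for a $\ast$-invariant $\Theta$ requires joint isotropy of \emph{all} the orbits of $\Theta$ over a single prime-to-$p$ extension. This is resolved by the monotonicity of $\delta_0$ and the fact that a composite of finite prime-to-$p$ extensions is again prime-to-$p$. The quasi-$p$-split hypothesis plays its role by ensuring that the reduction to Theorem \ref{classgeoalg} remains valid over every base change $E/F$, since $G_E$ and $G'_E$ continue to be inner forms of a common quasi-$p$-split group.
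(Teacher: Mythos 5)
Your proof follows essentially the same route as the paper: Theorem~\ref{classgeoalg} reduces motivic equivalence modulo $p$ to equivalence modulo $p$ of the standard flag varieties, and then a function-field argument relates this to isotropy of those varieties over prime-to-$p$ extensions, which is exactly what the upper $p$-Tits index records. Your forward direction runs orbit by orbit where the paper treats $\delta_0^p(G_E)$ as a whole, but that is a cosmetic difference.

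There is, however, a genuine error in the step you single out as the main subtlety. You claim that a compositum of finite prime-to-$p$ extensions is again of degree prime to $p$; this is false. Already over $\mathbb{Q}$ with $p=2$: take $f=x^3-x-1$, irreducible with Galois group $S_3$, and let $\alpha,\beta$ be two of its roots; then $\mathbb{Q}(\alpha)$ and $\mathbb{Q}(\beta)$ have degree $3$, but their compositum is the splitting field, of degree $6$. So the compositum $E'=E_1\cdots E_k$ need not be prime-to-$p$ over $E$, and the argument as written does not close. The fact you actually need --- that if each orbit $\Theta_i\subset\Theta$ lies in $\delta_0^p(G'_E)$, then $\Theta$ becomes distinguished over a \emph{single} prime-to-$p$ extension of $E$ --- is nonetheless true, and this is implicitly what the paper uses. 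One correct way to see it: each $X_{\Theta_i,G'_E}$ carries a zero-cycle of degree prime to $p$, hence so does the product $Y=\prod_i X_{\Theta_i,G'_E}$ (exterior product of cycles); picking a closed point of $Y$ of degree prime to $p$ with residue field $K$ yields a prime-to-$p$ extension $K/E$ over which every $X_{\Theta_i,G'}$ has a rational point, so $\Theta\subset\delta_0(G'_K)$. (Equivalently, one can iterate: $p$-distinguishedness is preserved under arbitrary base change by the zero-cycle characterization, so one chooses $E_1/E$, then $E_2/E_1$, and so on, each step prime-to-$p$, and a tower of prime-to-$p$ extensions \emph{is} prime-to-$p$.) With this repair your argument is correct and matches the paper's.
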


\begin{proof}
Considérons deux groupes semisimples $p$-intérieurs $G$ et $G'$ dont les $p$-indices de Tits supérieurs coincident, et fixons un sous-ensemble $\Theta$ de leur diagramme de Dynkin. Par corestriction d'une extension minimale sur laquelle $\Theta$ est $\ast$-invariant au corps de base, on peut supposer que $\Theta$ est invariant sous l'action du groupe de Galois. Une $G$-variété de drapeaux généralisée de type $\Theta$ possède visiblement un zéro-cycle de degré premier à $p$ sur le corps des fonctions de la variété $X_{\Theta,G'}$, puisque les $p$-indices de Tits de $G$ et $G'$ coincident cette extension. Une $G$-variété de drapeaux généralisée de type $\Theta$ est donc dominée modulo $p$ par une $G'$-variété de drapeaux même type, et ces variétés sont même équivalentes modulo $p$, par le même raisonnement en intervertissant leurs rôles. Les groupes $G$ et $G'$ sont donc équivalents modulo $p$, donc motiviquement équivalents modulo $p$ en vertu du théorème \ref{classgeoalg}.

Montrons désormais que les $p$-indices de Tits supérieurs de deux groupes semisimples $G$ et $G'$ motiviquement équivalents modulo $p$ coincident, c'est à dire que pour toute extension fixée $E/F$ les sous-ensembles $\delta_{0}^p(G_E)$ et $\delta_{0}^p(G'_E)$ son égaux. Par définition, il existe une extension $L/E$ de degré premier à $p$ sur laquelle la variété $X_{\delta_{0}^p(G_E),G}$ est isotrope. En outre, les groupes $G$ et $G'$ étant motiviquement équivalents modulo $p$, les variétés standards de type $\delta_{0}^p(G_E)$ associées à $G$ et $G'$ sont équivalentes modulo $p$, et la variété $X_{\delta_{0}^p(G_E),G'}$ a un zéro-cycle de degré premier à $p$ sur l'extension composée de $L$ et du corps de fonctions de $X_{\delta_{0}^p(G_E),G}$. Puisque $X_{\delta_{0}^p(G_E),G_L}$ est rationnelle, $X_{\delta_{0}^p(G_E),G'_E}$ a un zéro-cycle de degré premier à $p$. On a donc visiblement l'inclusion de $\delta_{0}^p(G_E)$ dans $\delta_{0}^p(G'_E)$, et le même raisonnement interchangeant les rôles de $G$ et $G'$ implique l'égalité de ces sous-ensembles. Les $p$-indices de Tits des groupes semisimples $G_E$ et $G'_E$ sont donc égaux.
\end{proof}

\begin{cor}\label{cortits}
La classe d'équivalence motivique modulo d'un groupe semisimple intérieur est uniquement déterminée par son type et par l'ensemble de ses $p$-indices de Tits supérieurs.
\end{cor}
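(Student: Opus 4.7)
The plan is to derive this corollary directly from Theorem \ref{classtits} applied for every prime $p$, relying on the definition of motivic equivalence as motivic equivalence modulo $p$ for all primes.

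First, I would observe that an inner group is automatically $p$-inner for every prime $p$: the minimal Galois extension over which it becomes of inner type is trivial, hence of degree $p^{0}$ for every $p$. Moreover, if $G$ and $G'$ are inner of the same type, they are trivially inner forms of a common quasi-split group (namely, the quasi-split form of that type), so the hypotheses of Theorem \ref{classtits} are satisfied for each prime $p$.

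With this in place, I would unroll the definitions: by construction, $G$ and $G'$ are motiviquement �quivalents if and only if they are motiviquement �quivalents modulo $p$ for every prime $p$. Applying Theorem \ref{classtits} prime by prime, this is equivalent to the coincidence of the $p$-indices de Tits sup�rieurs of $G$ and $G'$ for every $p$. Since the Dynkin diagram (with its trivial $\ast$-action, in the inner case) is encoded in each $p$-indice de Tits sup�rieur, the type is automatically recovered from this combinatorial data.

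There is no substantive obstacle: the corollary is a packaging of the per-prime criterion \ref{classtits} across all primes, made possible precisely by the fact that inner groups lie in the scope of that theorem for every $p$. The only point that requires a word of care is confirming that the type is not lost in the passage to the family of $p$-indices, which is immediate from the definition of a $p$-indice de Tits given in Section IX.
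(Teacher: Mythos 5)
Your proposal is correct and follows exactly the route the paper intends: since an inner group is $p$-inner for every prime $p$ (as the paper notes in Section~V) and two inner groups of the same type are both inner forms of the split group of that type, which is trivially quasi-$p$-d\'eploy\'e, Theorem~\ref{classtits} applies for each $p$, and motivic equivalence (which by definition is motivic equivalence modulo every $p$) is equivalent to coincidence of all higher Tits $p$-indices.
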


La classification des groupes semisimples $p$-intérieurs à équivalence motivique modulo $p$ près est ainsi réduite à la détermination des différents $p$-indices de Tits qui apparaissent sur les extensions sur corps de base. On s'attache désormais à décrire dans plusieurs cas classiques les valeurs prises par les $p$-indices de Tits. La détermination des valeurs possibles des $p$-indices de Tits pour tous les groupes semisimples fait l'objet du travail \cite{decskip} en commun avec Skip Garibaldi.

\section{Equivalence motivique, groupes classiques et structures algébriques associées}

Suivant Weil, Tits \cite{tits1,weil}, les groupes semisimples de type classique s'écrivent comme groupes d'automorphismes d'algèbres centrales simples munies d'involutions. On s'attarde ici principalement sur les groupes orthogonaux ainsi que les groupes spéciaux linéaires, les tables complètes de tous les $p$-indices de Tits des groupes semisimples et les différents critères d'équivalence motivique faisant l'objet de \cite{decskip}.

Un groupe semisimple $G$ est quasi-$p$-déployé si n'est pas $p$ un \emph{nombre premier de torsion homologique} de $G$, au sens de \cite{groth2,tits2}. Les expression des $p$-indices de Tits en fonction des invariants classiques associés aux structures algébriques sous-jacentes aux groupes semisimples nous permettent de produire des critères d'équivalence motivique purement algébriques. Ils clarifient en outre le lien qu'entretiennent les motifs et la géométrie birationnelle des variétés de drapeaux généralisées. \\

Par soucis de simplicité, on se concentre principalement dans cette partie sur les groupes de type intérieur. D'une part, deux groupes strictement isogènes étant motiviquement équivalent, il est loisible de limiter l'étude aux groupes simplement connexes (on pourrait faire de même en remplacant \emph{simplement connexe} par \emph{adjoint}). Un tel groupe est alors isomorphe à un produit fini de groupes absolument simples, eux mêmes simplement connexes. En outre, par définition de l'équivalence motivique (ou par le théorème \ref{classtits}, au choix) deux produits $G_1\times ... \times G_n$ et $G'_1 \times ... \times G'_n$ de groupes absolument simples sont motiviquement équivalents modulo $p$ si et seulement si les facteurs le sont deux à deux, à permutation près. 

\subsection{Groupes intérieurs de type $A_n$} Rappelons qu'une $F$-algèbre simple est dite \emph{centrale} si elle est de dimension finie et de centre $F$. Les groupes absoluments simples simplement connexes intérieurs de type $A_n$ sont les groupes spéciaux linéaires associés aux algèbres simples centrales (les groupes projectifs linéaires étant leurs analogues adjoints).\\

Le \emph{groupe de Brauer de $F$} est le groupe obtenu à partir de l'ensemble des classes de similitudes de $F$-algèbres simples centrales, muni de la structure induite par le produit tensoriel. On notera dans la suite $[A]$ la classe de similitude d'une $F$-algèbre simple centrale $A$. \`{A} isomorphisme près, toute algèbre simple centrale $A$ est semblable à une unique algèbre à division, dont le degré (c'est à dire la racine carrée de la dimension) est l'\emph{indice} de $A$. 

Soit donc $A$ une $F$-algèbre simple centrale et $p$ un nombre premier. On note $d_p$ la plus grande puissance de $p$ divisant l'indice de $A$. Le $p$-indice de Tits du groupe spécial linéaire $\Sl_1(A)$ est de la forme suivante, suivant les tables de \cite{decskip}.\\

\begin{figure}[!h] 
\includegraphics[width=\textwidth]{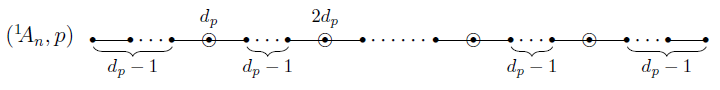}
\end{figure}

Le théorème \ref{classtits} assorti à cette description permet donc de caractériser l'équivalence motivique des groupes spéciaux linéaire comme suit.

\begin{crit}\label{critsev}Deux groupes spéciaux linéaires $\Sl(A)$ et $\Sl(B)$ sont motiviquement équivalents modulo $p$ si et seulement si pour toute extension $E/F$, les valuations $p$-adiques des indices de $A_E$ et $B_E$ coincident.
\end{crit}

Les $\Sl(A)$-variétés de drapeaux généralisées sont les variétés de drapeaux d'idéaux de $A$ de dimension fixée. Suivant nos conventions, une $\Sl(A)$-variété de drapeaux généralisée de type $\{i_1,...,i_k\}$ associée à une suite strictement croissante d'entiers est isomorphe à la variété des drapeaux d'idéaux à droite de $A$ de dimension réduite $i_1,...,i_k$.

\begin{thm}\label{motequivsevbrauer}
Deux groupes spéciaux linéaires $\Sl(A)$ et $\Sl(B)$ sont motiviquement équivalents si et seulement si les variétés de Severi-Brauer $\SB(A)$ et $\SB(B)$ sont stablement birationnellement équivalentes. Ces conditions sont équivalentes à l'égalité des indices de Tits de $\Sl(A)$ et $\Sl(B)$ sur toute extension du corps de base.
\end{thm}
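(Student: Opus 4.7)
La strat�gie consiste � rattacher chacune des trois conditions � la propri�t� commune que $\ind(A_E)=\ind(B_E)$ pour toute extension $E/F$. Pour la premi�re, l'�quivalence motivique de $\Sl(A)$ et $\Sl(B)$ est par d�finition l'�quivalence motivique modulo tout nombre premier $p$; le crit�re \ref{critsev} la traduit en $v_p(\ind A_E)=v_p(\ind B_E)$ pour tout $p$ et toute extension $E/F$, ce qui �quivaut imm�diatement � $\ind(A_E)=\ind(B_E)$ sur toute extension.

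La seconde �quivalence est purement combinatoire. Les tables rappel�es ci-dessus (issues de \cite{decskip}) montrent que dans le diagramme $A_{n-1}$ de $\Sl_1(A)$, les sommets distingu�s sont exactement les positions multiples de $\ind(A)$. Par cons�quent l'indice de Tits de $\Sl_E(A)$ est enti�rement d�termin� par $\ind(A_E)$, et l'�galit� des indices de Tits sur toute extension se traduit litt�ralement par l'�galit� des indices des alg�bres.

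La troisi�me �quivalence rel�ve d'un argument classique sur les vari�t�s de Severi-Brauer. Dans un sens, une �quivalence birationnelle stable entre $\SB(A)$ et $\SB(B)$ entra�ne que pour toute extension $E/F$, les pgcd des degr�s des points ferm�s de $\SB(A)_E$ et $\SB(B)_E$ co�ncident; or ce pgcd vaut pr�cis�ment l'indice de l'alg�bre correspondante. R�ciproquement, supposons les indices �gaux sur toute extension; en prenant $E=F(\SB(A))$ on obtient que $A_E$ est scind�e, donc $B_E$ aussi, et $\SB(B)_E$ est alors rationnelle. Ainsi $F(\SB(A)\times\SB(B))$ est une extension transcendante pure de $F(\SB(A))$ et sym�triquement de $F(\SB(B))$, ce qui fournit directement la birationnalit� stable de $\SB(A)\times\mathp^{n-1}$ et $\SB(B)\times\mathp^{n-1}$.

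Le r�sultat d�coule ainsi formellement de ces trois rattachements; le v�ritable travail est en amont, dans le crit�re \ref{critsev} -lui-m�me adoss� au th�or�me \ref{classtits}- et dans la d�termination des $p$-indices de Tits de type $A_n$ issue de \cite{decskip}.
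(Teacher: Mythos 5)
Your proof is correct and follows essentially the same route as the paper's, which simply cites Corollaire \ref{cortits} together with the ``bien connu'' equivalence between agreement of Tits indices of $\Sl(A)$, $\Sl(B)$ on all extensions and stable birational equivalence of $\SB(A)$, $\SB(B)$. You usefully unpack what the paper leaves implicit: first, that via le crit\`ere \ref{critsev} motivic equivalence modulo every $p$ amounts to $\ind(A_E)=\ind(B_E)$ for all $E/F$, which is exactly agreement of the classical Tits indices; and second, the classical transcendence argument (index reduction and rationality over the function field of the Severi-Brauer variety) establishing that this last condition is equivalent to stable birationality of $\SB(A)$ and $\SB(B)$.
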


\begin{proof}
Les groupes $\Sl(A)$ et $\Sl(B)$ sont motiviquement équivalents si et seulement si leurs indices de Tits coincident sur toutes extension de corps, c'est le corollaire \ref{cortits}. Cette condition est équivalente, c'est bien connu, à l'équivalence birationnelle stable des variétés de Severi Brauer associées. 
\end{proof}

Les classes d'équivalences motiviques des groupes intérieurs de type $A_n$ sont donc déterminées par les indices de Tits classiques. Comme observé précédemment, le lien posé entre l'équivalence motivique de ces groupes et les classes d'équivalence birationnnelle stables de variétés de Severi-Brauer ne vaut absolument pas si l'on considère l'équivalence motivique à coefficients entiers. On peut en effet montrer que deux tels groupes sont motiviquement équivalents à coefficients entiers si et seulement s'ils sont isomorphes. En particulier on peut construire des groupes semisimples intérieurs de type $A_n$ qui ne sont \emph{pas} motiviquement équivalents à coefficients entiers, alors même que les variétés de Severi-Brauer associées sont \emph{birationnellement équivalentes}.

Avant de nous attarder sur les groupes orthogonaux, observons deux conséquences frappantes des critères d'équivalence motivique des groupes spéciaux linéaires.

\begin{thm}\label{anisotrope}
Soient $A$ et $B$ deux algèbres simples centrales de même degré, et deux variétés de drapeaux généralisées $X_{\Theta,\Sl_1(A)}$ et $X_{\Theta,\Sl_1(B)}$ anisotropes et de même type. Les conditions suivantes sont équivalentes :
\begin{enumerate}
 \item les motifs supérieurs $U_{\Theta,\Sl_1(A)}$ et $U_{\Theta,\Sl_1(B)}$ à coefficients dans un corps fini à $p$ éléments sont isomorphes;

\item les variétés $X_{\Theta,\Sl_1(A)}$ et $X_{\Theta,\Sl_1(B)}$ sont équivalentes modulo $p$;

\item les motifs $M_{\Theta,\Sl_1(A)}$ et $M_{\Theta,\Sl_1(B)}$ à coefficients dans un corps à $p$ éléments sont isomorphes;

\item les groupes $\Sl_1(A)$ et $\Sl_1(B)$ sont motiviquement équivalents modulo $p$.
\end{enumerate}
\end{thm}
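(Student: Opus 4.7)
La stratégie consiste à fermer le cycle des quatre conditions via la chaîne $(4)\Rightarrow(3)\Rightarrow(1)\Leftrightarrow(2)$ puis l'implication cruciale $(2)\Rightarrow(4)$. L'implication $(4)\Rightarrow(3)$ est immédiate par définition même de l'équivalence motivique modulo $p$. Pour $(3)\Rightarrow(1)$, le motif supérieur $U_{\Theta,\Sl_1(A)}$ est, par construction, l'unique facteur direct indécomposable du motif standard $M_{\Theta,\Sl_1(A)}$ contenant $\mathbb{F}_p[0]$ après extension à un corps déployant ; Krull-Schmidt dans $\Mot(F,\mathbb{F}_p)$ transporte alors toute isomorphie des motifs standards en isomorphie des motifs supérieurs correspondants. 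L'équivalence $(1)\Leftrightarrow(2)$ est précisément le contenu de \cite[Corollary 2.15]{upper}.

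L'essentiel du travail réside dans l'implication $(2)\Rightarrow(4)$. J'invoque le théorème \ref{classtits} : il suffit de montrer que les $p$-indices de Tits supérieurs de $\Sl_1(A)$ et $\Sl_1(B)$ coincident, et le critère \ref{critsev} ramène cela à l'égalité $\nu_p(\ind(A_E)) = \nu_p(\ind(B_E))$ pour toute extension $E/F$. L'équivalence modulo $p$ des variétés est préservée par tout changement de base, si bien que sur chaque $E$ l'isomorphisme $U_{\Theta,\Sl_1(A)}\cong U_{\Theta,\Sl_1(B)}$ fourni par $(1)$ s'étend en un isomorphisme $(U_{\Theta,\Sl_1(A)})_E \cong (U_{\Theta,\Sl_1(B)})_E$ dans $\Mot(E,\mathbb{F}_p)$. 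L'idée est alors de décomposer les deux membres dans la catégorie de Krull-Schmidt $\Mot_{\Sl_1(A_E)}(E,\mathbb{F}_p)$ en sommes de motifs supérieurs décalés et d'y lire $\nu_p(\ind(A_E))$.

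Le principal obstacle se situe à cette dernière étape et utilise de façon essentielle l'hypothèse d'anisotropie. L'anisotropie de $X_{\Theta,\Sl_1(A)}$ sur $F$ garantit que $U_{\Theta,\Sl_1(A)}$ n'est pas réduit au motif de Tate $\mathbb{F}_p[0]$ et porte donc une information effective sur la composante $p$-primaire de $A$. Le théorème de structure de Karpenko classifiant les motifs supérieurs en type $A$ permet alors d'identifier, pour $A$ et $B$ de même degré, l'ensemble des facteurs indécomposables apparaissant dans la décomposition ci-dessus comme un invariant complet de $\nu_p(\ind(A_E))$. La perte éventuelle d'anisotropie sur $E$ n'est pas un obstacle, puisque l'on travaille avec le changement de base du motif supérieur depuis $F$, qui retient davantage d'information que le motif supérieur sur $E$ lui-même. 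L'égalité des valuations $p$-adiques des indices sur toute extension impliquant, par le critère \ref{critsev}, l'équivalence motivique modulo $p$ de $\Sl_1(A)$ et $\Sl_1(B)$, la boucle est fermée.
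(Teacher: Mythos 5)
Your global structure is sound: the cycle $(4)\Rightarrow(3)\Rightarrow(1)\Leftrightarrow(2)$ and then $(2)\Rightarrow(4)$ parallels the paper, which isolates exactly $(1)\Rightarrow(4)$ as the only nontrivial implication. Your treatments of $(4)\Rightarrow(3)$, $(3)\Rightarrow(1)$ by Krull--Schmidt, and $(1)\Leftrightarrow(2)$ via Karpenko's \cite[Corollary 2.15]{upper} are all correct and match what the paper implicitly invokes.

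The gap lies in the crucial step. To close $(2)\Rightarrow(4)$ you want to read $\nu_p(\ind(A_E))$ off the base change $(U_{\Theta,\Sl_1(A)})_E$ of the upper motive, and you justify this by appealing to a \og th\'eor\`eme de structure de Karpenko classifiant les motifs sup\'erieurs en type $A$\fg. That result does not exist in this form. Karpenko's structure theorem (\cite[Theorem~1.1]{outer}) tells you that every indecomposable factor of a motive of a quasi-$G$-flag variety is a shifted upper motive $U_{\{p^k\},\Sl_1(D)}$ with $D$ a division algebra Brauer-equivalent to the $p$-primary part of $A$; it does not tell you that an isomorphism $U_{\{p^k\},\Sl_1(D)}\cong U_{\{p^l\},\Sl_1(D')}$ forces $k=l$, nor that it forces the upper motives of all other types to agree, nor a fortiori that the valuations $\nu_p(\ind(D_E))$ and $\nu_p(\ind(D'_E))$ coincide over all $E$. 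That rigidity is precisely the \emph{dichotomie motivique} of \cite[Theorem~4.2, Corollary~4.3]{dec3}, which is the author's own result and not a consequence of Karpenko's theorem. Without it, your central claim --- that the set of indecomposable factors of $(U_{\Theta,\Sl_1(A)})_E$ is a complete invariant of $\nu_p(\ind(A_E))$ --- is unjustified, and is in fact false as literally stated (the decomposition depends on the full higher index, not on a single valuation).

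The remark that the base-changed upper motive \og retient davantage d'information que le motif sup\'erieur sur $E$\fg\ is an interesting intuition but you never make it effective; in particular if the $p$-primary part of $A$ splits over $E$, the base change decomposes into Tate motives and no longer distinguishes valuations. The paper avoids this by never leaving the base field: it applies Karpenko's structure theorem to write $U_{\Theta,\Sl_1(A)}\cong U_{\{p^k\},\Sl_1(D)}$, then the dichotomy of \cite{dec3} to get $k=l$ and isomorphism of all upper motives of fixed type, which gives equivalence mod~$p$ of $\Sl_1(A)$ and $\Sl_1(B)$, and concludes by Theorem~\ref{classgeoalg}. If you insert the dichotomy result at the right place your route through the criterion \ref{critsev} can be repaired, but as written the key step is missing.
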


\begin{proof}
Il suffit en vertu des discussions précédentes de montrer que la condition $(1)$ implique la condition $(4)$. Supposons donc que les motifs supérieurs $U_{\Theta,\Sl_1(A)}$ et $U_{\Theta,\Sl_1(B)}$ à coefficients dans un corps à $p$ éléments soient isomorphes. Le théorème de structure de Karpenko implique que ces motifs sont isomorphes respectivement à deux motifs supérieurs $U_{\{p^k\},\Sl_1(D)}$ et $U_{\{p^l\},\Sl_1(D')}$, où $D$ et $D'$ sont deux algèbres à division semblabes aux parties $p$-primaires de $A$ et $B$. La dichotomie motivique des groupes projectifs linéaires \cite[Theorem 4.2, Corollary 4.3]{dec3} induit donc d'une part que $k$ et $l$ sont égaux, et d'autre part que les motifs supérieurs de type fixé de $\Sl_1(D)$ et $\Sl_1(D')$ sont isomorphes deux à deux. Il en va donc de même pour $\Sl_1(A)$ et $\Sl_1(B)$ et le théorème \ref{classgeoalg} permet d'affirmer que ces groupes sont donc motiviquement équivalents modulo $p$.
\end{proof}

\begin{cor}
Soient $A$ et $B$ deux algèbres simples centrales non-déployées et de même degré. Les groupes $\Sl_1(A)$ et $\Sl_1(B)$ sont motiviquement équivalents si et seulement si deux variétés de drapeaux anisotropes $X_{\Theta,\Sl_1(A)}$ et $X_{\Theta,\Sl_1(B)}$ sont stablement birationnellement équivalentes.
\end{cor}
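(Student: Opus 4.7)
Le plan est de combiner deux r�sultats d�j� �tablis dans cette section : le th�or�me \ref{motequivsevbrauer} pour la condition n�cessaire et le th�or�me \ref{anisotrope} pour la condition suffisante. Les deux ingr�dients ne laissent essentiellement qu'une articulation soigneuse � assurer.

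Pour la condition n�cessaire, je supposerais $\Sl_1(A)$ et $\Sl_1(B)$ motiviquement �quivalents. Le th�or�me \ref{motequivsevbrauer} affirme alors que les vari�t�s de Severi-Brauer $\SB(A)$ et $\SB(B)$ sont stablement birationnellement �quivalentes. Selon nos conventions ces vari�t�s co�ncident respectivement avec $X_{\{1\},\Sl_1(A)}$ et $X_{\{1\},\Sl_1(B)}$; comme $A$ et $B$ sont suppos�es non-d�ploy�es, elles sont anisotropes -une vari�t� de Severi-Brauer poss�dant un point rationnel si et seulement si l'alg�bre sous-jacente est d�ploy�e-. On obtient ainsi un couple de vari�t�s de drapeaux anisotropes, de m�me type $\Theta=\{1\}$, stablement birationnellement �quivalentes, ce qui fournit la condition recherch�e.

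R�ciproquement, supposons qu'il existe un sous-ensemble $\Theta$ et un couple de vari�t�s anisotropes $X_{\Theta,\Sl_1(A)}$ et $X_{\Theta,\Sl_1(B)}$ stablement birationnellement �quivalentes. L'�quivalence birationnelle stable fournit des entiers $m,n$ tels que $X_{\Theta,\Sl_1(A)}\times \mathp^m$ et $X_{\Theta,\Sl_1(B)}\times \mathp^n$ soient birationnellement �quivalentes; l'invariance par homotopie des groupes de Chow \cite[Theorem 57.13]{EKM} assure alors que chaque vari�t� poss�de un z�ro-cycle de degr� $1$ sur le corps des fonctions de l'autre, donc la domine. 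Ces vari�t�s sont ainsi �quivalentes, et en particulier �quivalentes modulo tout nombre premier $p$. Je fixerais alors un tel $p$ et appliquerais le th�or�me \ref{anisotrope} aux vari�t�s anisotropes $X_{\Theta,\Sl_1(A)}$ et $X_{\Theta,\Sl_1(B)}$, ce qui fournit l'�quivalence motivique modulo $p$ des groupes $\Sl_1(A)$ et $\Sl_1(B)$. Le nombre premier $p$ �tant arbitraire, on obtient l'�quivalence motivique annonc�e.

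L'unique point d�licat est le recours au th�or�me \ref{anisotrope} dans la r�ciproque : toute sa force -qui repose in fine sur la dichotomie motivique des groupes projectifs lin�aires \cite[Theorem 4.2]{dec3}- consiste � transformer une information locale (l'�quivalence modulo $p$ d'un unique couple de vari�t�s de drapeaux anisotropes de type fix�) en une information globale portant sur la classe d'�quivalence motivique du groupe tout entier. La condition de non-d�ploiement assure quant � elle simplement la non-trivialit� de l'hypoth�se, sans quoi on pourrait exhiber des vari�t�s de drapeaux rationnelles v�rifiant la conclusion sans contrainte sur les alg�bres $A$ et $B$.
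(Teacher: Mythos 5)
Your proof is correct and its sufficient-condition half is exactly the paper's: stable birational equivalence gives mutual dominance hence equivalence modulo every prime $p$, and the theorem \ref{anisotrope} then upgrades this to motivic equivalence modulo $p$ of the whole groups, for all $p$. For the necessary direction the paper simply invokes the corollaire \ref{inner} (which yields that the Severi--Brauer varieties, anisotropic since $A$ and $B$ are non-d\'eploy\'ees, are \emph{\'equivalentes}, combined with the observation of Section II that for Severi--Brauer varieties \'equivalence coincides with \'equivalence birationnelle stable), whereas you invoke the th\'eor\`eme \ref{motequivsevbrauer}, which packages exactly that chain of implications; the two routes are logically the same, yours just citing the more self-contained statement.

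One small imprecision worth noting: your closing remark that non-d\'eploiement only serves to make the hypothesis non-trivial undersells its role. The anisotropy of $\SB(A)$ and $\SB(B)$ -- equivalent to $A$, $B$ being non-split -- is what lets you exhibit a pair of \emph{anisotropic} flag varieties as required by the statement, and also what makes the hypotheses of the th\'eor\`eme \ref{anisotrope} applicable. The condition is therefore used substantively in both directions, not merely to avoid degeneracy.
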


\begin{proof}
La condition nécessaire est contenue dans le corollaire \ref{inner}. En outre, si deux variétés $X_{\Theta,\Sl_1(A)}$ et $X_{\Theta,\Sl_1(B)}$ sont stablement birationnellement équivalentes, elle sont à fortiori équivalentes modulo tout nombre premier $p$. La condition suffisante découle donc du théorème \ref{anisotrope}.
\end{proof}

\subsection{Groupes orthogonaux}

Le groupe spécial orthogonal $\SO(q)$ d'une forme quadratique non-dégénérée sur un corps de caractéristique différente de $2$ est, suivant la parité de la dimension de $q$, de type $B_n$ ou $D_n$, éventuellement extérieur. Comme expliqué précédemment, les groupes orthogonaux peuvent être remplacés par leurs revêtements simplement connexes (c'est à dire les groupes spinoriels) dans l'ensemble des énoncés suivants.

Si deux formes quadratiques sont de même dimension impaire, ou si leur dimension est paire et leur discriminant est de même nature, l'équivalence motivique des groupes orthogonaux associés peut se déduire directement du théorème \ref{classtits} et des indices suivants, extraits de \cite{decskip}.

\begin{figure}[!h] 
\includegraphics[width=\textwidth]{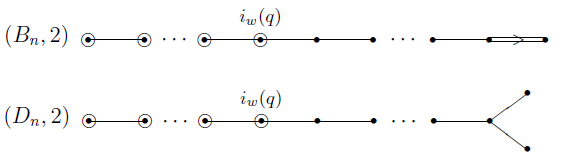}
\end{figure}

Un petit travail supplémentaire est cependant nécessaire si l'on veut éviter les circonvolutions concernant le type des groupes considérés dans le cas $D_n$.

Suivant \cite{decskip}, on se concentre ici sur $2$-indices de Tits ($2$ étant l'unique nombre premier de torsion homologique des groupes orthogonaux). Observons $2$-indice de Tits d'un groupe spécial orthogonal correspond à l'indice de Tits classique et est déterminé par l'indice de Witt de la forme quadratique sous-jacente, c'est le theorème de Springer.

\begin{crit}\label{critquad}Soient $q$ et $q'$ deux formes quadratiques de même dimension. Les groupes orthogonaux $\SO(q)$ et $\SO(q')$ sont motiviquement équivalents si et seulement si les indices de Witt de $q$ et $q'$ coincident sur toutes les extensions de leur corps de définition. Ces conditions sont équivalentes à l'isomorphisme des motifs des quadriques projectives associés à $q$ et $q'$ (à coefficients entiers ou dans $\mathbb{F}_2$, au choix).
\end{crit}

\begin{proof}
La première équivalence est claire pour deux groupes orthogonaux de même type, d'après la discussion précédente. Pour démontrer le cas général, on observe que si les indices de Witt de $q$ et $q'$ coincident sur toute extension du corps de base, les groupes $\SO(q)$ et $\SO(q')$ sont nécessairement formes intérieures d'un même groupe quasi-déployé, c'est l'objet de \cite[Lemma 2.6]{motequivkarp}.

Sous ces conditions les motifs des quadriques projectives de $q$ et $q'$ sont isomorphes à coefficients dans $\mathbb{F}_2$. Cet isomorphisme se relève en un isomorphisme des motifs de ces quadriques à coefficients entiers par \cite{hau1}. Le critère de Vishik \cite[Criterion 0.1]{motequivkarp} assure donc que les indices de Witt de $q$ et $q'$ coincident sur toutes les extensions de leur corps de définition.
\end{proof}

Notons que sous certaines hypothèses supplémentaires les conditions du critère \ref{critquad} sont équivalentes au fait que les groupes $\SO(q)$ et $\SO(q')$ (ou les quadriques projectives de $q$ et $q'$, c'est équivalent) soient isomorphes. C'est par exemple en dimension impaire et en basse dimension \cite{izh}, ou sur les corps globaux \cite{hoffman2}.

Comme signalé par A. Vishik, le critère précédent peut être partiellement retrouvé à partir des résultats de \cite{vishmot} sous l'hypothèse que le corps de base soit de caractéristique $0$ et en se restreignant aux variétés de drapeaux de sous-espaces isotropes de dimensions consécutives $0,...,k$. En effet les décompositions de \cite[Claim 3.2]{vishmot} obtenues dans la catégorie des motifs de Voevodsky permettent de montrer que si les motifs de deux quadriques à coefficients entiers sont isomorphes, il en va de même pour les motifs de drapeaux consécutifs de sous espaces isotropes. Les motifs supérieurs considérés dans cet article correspondent alors dans ce cas particulier aux schémas simpliciaux $\mathcal{X}_{Q^i}$ de Vishik.

Notons enfin que le lien entre la géométrie birationnelle des quadriques et l'équivalence motivique des groupes orthogonaux est moins riche que celui observé pour les variétés de Severi-Brauer au théorème \ref{motequivsevbrauer}. On peut construire des formes quadratiques dont les quadriques respectives sont \emph{birationnellement équivalentes}, sans pour autant que les groupes orthogonaux associés soient motiviquement équivalents (voir \cite{ohm,rouss}). L'équivalence motivique est donc un invariant trop fin pour être utilisé autour du problème de Zariski sur les quadriques \cite[Chap. XIII, Question 6.10]{lam,hoffman}, contrairement au problème d'Amitsur des variétés de Severi-Brauer \cite{amitsur}.

\vspace{1cm}

\bibliographystyle{amsplain}

\begin{thebibliography}{10}

\bibitem{ohm} \textsc{Ahmad, H., Ohm, J.} \emph{Function fields of Pfister neighbors}, J. Algebra 178, 653-664, 1995.

\bibitem{amitsur} \textsc{Amitsur, S.}, \emph{Generic splitting fields of central simple algebras}, Ann. of Math. 62, 8-43, 1955.

\bibitem{borel} \textsc{Borel, A.}, \emph{Linear Algebraic groups}, second ed., Graduate Texts in Mathematics, vol. 126, Springer-Verlag, New-York, 1991.

\bibitem{borelspringer} \textsc{Borel, A., Springer, T.A.}, \emph{Rationality properties of linear algebraic groups II}, Tôhoku Math. J. 20, 443-497, 1968.

\bibitem{boreltits} \textsc{Borel, A., Tits, J.}, \emph{Groupes réductifs}, Publications mathématiques de l'I.H.É.S, Tome 27, 55-151, 1965.

\bibitem{bourba} \textsc{Bourbaki, N.} \emph{Éléments de mathématique}, Groupes et algèbres de Lie, Chapitres 4, 5 et 6, Masson, Paris, 1981.

\bibitem{brosnan} \textsc{Brosnan, P.}, \emph{On motivic decompositions arising from the method of Bia lynicki-Birula}, Invent. MAth. 161, 1, 91-111, 2005.

\bibitem{chowmottwistflag} \textsc{Calmès, B., Semenov, N., Petrov, V., Zainoulline, K.}, \emph{Chow motives of twisted flag varieties}, 
Compos. Math. 142, 1063-1080, 2006.

\bibitem{chermergil} \textsc{Chernousov, V., Gille, S., Merkurjev, A.} \emph{Motivic decomposition of isotropic projective homogeneous varieties}, Duke Math. J., 126(1) :137-159, 2005.

\bibitem{chermer} \textsc{Chernousov, V., Merkurjev, A.} \emph{Motivic decomposition of projective homogeneous varieties and the Krull-Schmidt theorem}, Transformation Groups 11, 371-386, 2006.

\bibitem{decvarprojcoeff} \textsc{De Clercq, C.} \emph{Motivic decompositions of projective homogeneous varieties and change of coefficients}, C. R. Math. Acad. Sci. Paris 348, volume 17-18, 955--958, 2010.

\bibitem{dec3} \textsc{De Clercq, C.} \emph{Classification of upper motives of algebraic groups of inner type $A_n$}, C. R. Math. Acad. Sci. Paris 349, volume 7-8, 433--436, 2011.

\bibitem{decskip} \textsc{De Clercq, C., Garibaldi, S.} \emph{On the Tits $p$-indexes of semisimple algebraic groups}, preprint.

\bibitem{groth} \textsc{Demazure, M., Grothendieck, A., \emph{Schémas en groupes. II : Groupes de type multiplicatif, et structure des schémas en groupes généraux. Séminaire de Géométrie Algébrique du Bois-Marie (SGA 3). Dirigé par M. Demazure et A. Grothendieck. Lecture Notes in Mathematics, Vol. 152. Springer-Verlag, Berlin, 1962/64}}

\bibitem{EKM} \textsc{Elman, R., Karpenko, N., Merkurjev, A.} \emph{The Algebraic and Geometric Theory of Quadratic Forms}, AMS Colloquium Publications, Vol. 56, 2008.
\bibitem{fulton} \textsc{Fulton, W.} \emph{Intersection theory}, Ergebnisse der Mathematik und ihrer Grenzgebiete. 3. Folge. A Series of Modern Surveys in Mathematics 2, Springer-Verlag, 1998.

\bibitem{manin} \textsc{Manin, Yu.I.} \emph{Correspondence, Motifs, and Monoidal Transformations}, Mat. Sb., Nov. Ser. 77 (1968) 475-507, English transl. : Math.USSR, Sb. 6 439-470, 1968.

\bibitem{shells} \textsc{Garibaldi, S., Petrov, V., Semenov, N.}, \emph{Shells of twisted flag varieties and the Rost invariant}, preprint 2013, disponible sur la page personnelle des auteurs.

\bibitem{groth2} \textsc{Grothendieck, A.}, \emph{La torsion homologique et les sections rationnelles}, Anneaux de Chow et applications, Exposé 5, Séminaire C. Chevalley, 1958.

\bibitem{hau1} \textsc{Haution, O.}, \emph{Lifting of coefficients for Chow motives of quadrics}, Quadratic forms, linear algebraic groups, and cohomology, Dev. Math. 18, 239-247, Springer, New York, 2010.

\bibitem{hoffman} \textsc{Hoffmann, D.W.}, \emph{Similarity of quadratic forms and half-neighbors}, J. Algebra 204, 255-280, 1998.

\bibitem{hoffman2} \textsc{Hoffmann, D.W.}, \emph{Motivic equivalence en similarity of quadratic forms}, preprint.

\bibitem{izh} \textsc{Izhboldin, O.}, \emph{Motivic equivalence of quadratic forms}, Documenta Mathematica vol. 3, 341-351, 1998.

\bibitem{kahn} \textsc{Kahn, B.}, \emph{Formes quadratiques et cycles algébriques [d'après Rost, Voevodsky, Vishik, Karpenko...]}, Exposé Bourbaki no 941, Astérisque 307, 113-163, 2006.

\bibitem{kahn2} \textsc{Kahn, B.}, \emph{Formes quadratiques sur un corps}, Cours spécialisé no 15, SMF, 2008.

\bibitem{ICM} \textsc{Karpenko, N.}, \emph{Canonical dimension}, Proceedings of the ICM 2010, vol. II, 146-161.

\bibitem{motequivkarp} \textsc{Karpenko, N.}, \emph{Criteria of motivic equivalence for quadratic forms and central simple algebras}, Math. Ann. 317 no. 3, 585-611, 2000.

\bibitem{essdimquad} \textsc{Karpenko, N., Merkurjev, A.}, \emph{Esential dimension of quadrics}, Invent. Math. 153, 361-372, 2003.

\bibitem{kar1} \textsc{Karpenko, N.}, \emph{Sufficiently generic orthogonal grassmannians}, J. Algebra 372, 365--375, 2012. 

\bibitem{upper} \textsc{Karpenko, N.}, \emph{Upper motives of algebraic groups and incompressibility of Severi-Brauer varieties}, J. Reine Angew. Math. 677, 179--198, 2013.

\bibitem{outer} \textsc{Karpenko, N.}, \emph{Upper motives of outer algebraic groups}, Quadratic forms, linear algebraic groups, and cohomology, Dev. Math. 18, 249-258, Springer, New York, 2010.

\bibitem{kerreh} \textsc{Kersten, I., Rehmann, U.} \emph{Generic splitting of reductive groups}, Tohoku Math. J., vol. 46 1, 35-70, 1994.

\bibitem{KMRT} \textsc{Knus, M.-A., Merkurjev, A., Rost, M., Tignol, J.-P.}, \emph{The book of involutions}, préface de J. Tits, AMS Colloquium Publications, Vol. 44, 1998.

\bibitem{kock} \textsc{K\"{o}ck, B.} \emph{Chow motif and higher Chow theory of $G/P$}, Manuscripta Math., 70(4) :363-372, 1991.

\bibitem{lam} \textsc{Lam, T.Y.} \emph{Introduction to quadratic forms over fields}, Graduate Studies in Mathematics, vol. 67, American Mathematical Society, Providence, RI, 2005.


\bibitem{index} \textsc{Merkurjev, A., Panin, A., Wadsworth, A.} \emph{Index reduction formulas for twisted flag varieties, I}, Journal K-theory 10, 517-596, 1996.

\bibitem{index2} \textsc{Merkurjev, A., Panin, A., Wadsworth, A.} \emph{Index reduction formulas for twisted flag varieties, II}, Journal K-theory 14, 101-196, 1998.

\bibitem{jinv} \textsc{Petrov, V., Semenov, N., K.Zainoulline} \emph{J-invariant of linear algebraic groups}, Ann. Sci. Éc. Norm. Sup. 41, 1023-1053, 2008.

\bibitem{gensplit1} \textsc{Petrov, V., Semenov, N.} \emph{Generically split projective homogeneous varieties}, avec un appendice de M. Florence, Annales Scientifiques de l'ecole normale supérieure 41, 1023-1053, 2008.

\bibitem{gensplit2} \textsc{Petrov, V., Semenov, N.} \emph{Generically split projective homogeneous varieties. II}, J. of K-theory 10, issue 1, 1-8, 2010.

\bibitem{trial} \textsc{Quéguiner-Mathieu, A., Semenov, N., Zainoulline, K.}, Journal of Pure and Applied Algebra 216, issue 12, 2614-2628, 2012.

\bibitem{rost1} \textsc{Rost, M.} \emph{Some new results on the Chow groups of quadrics}, preprint available on the web page of the author, 1990.

\bibitem{rost2} \textsc{Rost, M.} \emph{The motive of a Pfister form}, preprint available on the web page of the author, 1998.

\bibitem{rouss} \textsc{Roussey, S.} \emph{Corps de fonctions et équivalences birationnelles des formes quadratiques}, Thèse de doctorat, Université de Franche-Comté, 2005.


\bibitem{springer} \textsc{Springer, T.A.}, \emph{Sur les formes quadratiques d'indice zéro}, C.R. Acad. Sci. Paris 234, 1517-1519, 1952.


\bibitem{tits1} \textsc{Tits, J.}, \emph{Classification of algebraic semisimple groups}, Algebraic Groups and Discontinuous Sibgroups, Amer. Math. Soc., Providence, RI, 33-62, 1966.

\bibitem{tits2} \textsc{Tits, J.}, \emph{Sur les degrés des extensions de corps déployant les groupes algébriques simples}, C.R. Acad. Sci. Paris 315, Série 1, 1131-1138, 1992.


\bibitem{vishyag} \textsc{Vishik, A., Yagita, N.}, \emph{Algebraic cobordism of a Pfister quadric}, J. of the London Math. Soc. 76 no2, 586-604, 2007.

\bibitem{vishmot} \textsc{Vishik, A.}, \emph{Integral Motives of Quadrics}, preprint of the Max Planck Institute 13, available at www.maths.nottingham.ac.uk/personal/av/Papers/preprint-287.pdf, 1998.

\bibitem{vish} \textsc{Vishik, A.} \emph{Motives of quadrics with applications to the theory of quadratic forms}, Lect. Notes in Math. 1835, Proceedings of the Summer School "Geometric Methods in the Algebraic Theory of Quadratic Forms, Lens 2000", 25-101, 2004.


\bibitem{weil} \textsc{Weil, A.} \emph{Algebras with involutions and the classical groups}, J. Indian Math. Soc. (N.S.) 24, 589-623, 1961


\end{thebibliography}

\end{document}